\documentclass[12pt,a4paper,UKenglish,thmsb]{article}
\usepackage{graphicx}
\usepackage{amsfonts}
\usepackage{amssymb}
\usepackage{amsmath}
\usepackage{makeidx}

\setcounter{MaxMatrixCols}{10}

\newtheorem{theorem}{Theorem}[section]

\newtheorem{condition}{Condition}

\newtheorem{corollary}[theorem]{Corollary}

\newtheorem{definition}{Definition}[section]

\newtheorem{lemma}[theorem]{Lemma}
\newtheorem{notation}{Notation}

\newtheorem{proposition}[theorem]{Proposition}
\newtheorem{remark}{Remark}[section]

\newenvironment{proof}[1][Proof]{\textbf{#1.} }{\ \rule{0.5em}{0.5em}}

\begin{document}

\title{Upper bounds for transition probabilities on graphs and isoperimetric
inequalities}
\author{Andr\'{a}s Telcs \\
{\small Department of Computer Science and Information Theory, }\\
{\small University of Technology and Economy Budapest}\\
{\small telcs@szit.bme.hu}}
\maketitle

\begin{abstract}
In this paper necessary and sufficient conditions are presented for heat
kernel upper bounds for random walks on weighted graphs. \ Several
equivalent conditions are given in the form of isoperimetric inequalities.

\textbf{Keywords:} isoperimetric inequalities, random walks, heat kernel
estimates

\textbf{MSC: }60J10, 60J45, 62M15
\end{abstract}

\tableofcontents

\section{Introduction}

\setcounter{equation}{0}

Heat kernel upper bounds are subject of heavy investigations for decades. \
Aronson, Moser, Varopoulos, Davies, Li and Yau, Grigor'yan, Saloff-Coste and
others contributed to the development of the area (for the history see the
bibliography of \cite{VSC}). \ The work of Varopoulos highlighted the
connection between the heat kernel upper estimates and isoperimetric
inequalities. \ The present paper follows this approach and provides
transition probability upper estimates of reversible Markov chains in a
general form under necessary and sufficient conditions. \ The conditions are
isoperimetric inequalities which control the smallest Dirichlet eigenvalue,
the capacity or the mean exit time of a finite vertex set. \ In addition,
the paper presents a generalization of the Davies-Gaffney inequality (c.f. 
\cite{CG}) which is a tool in the proof of the off-diagonal upper estimate.

Let us consider a countable infinite connected graph $\Gamma $.

\begin{definition}
A symmetric weight function $\mu _{x,y}=\mu _{y,x}>0$ is given on the edges $%
x\sim y$. This weight function induces a measure $\mu (x)$%
\begin{eqnarray*}
\mu (x) &=&\sum_{y\sim x}\mu _{x,y}, \\
\mu (A) &=&\sum_{y\in A}\mu (y).
\end{eqnarray*}%
The graph is equipped with the usual (shortest path length) graph distance $%
d(x,y)$ and open metric balls are defined for $x\in \Gamma ,$ $R>0$ as 
\begin{equation*}
B(x,R)=\{y\in \Gamma :d(x,y)<R\}
\end{equation*}%
and its $\mu -$measure is denoted by $V(x,R)$%
\begin{equation*}
V\left( x,R\right) =\mu \left( B\left( x,R\right) \right) .
\end{equation*}%
The weighted graph has the volume doubling$\ $ property \textbf{$(\mathbf{VD)%
}$} if there is a constant $D_{V}>0$ such that for all $x\in \Gamma $ and $%
R>0$%
\begin{equation}
V(x,2R)\leq D_{V}V(x,R).  \label{VD}
\end{equation}
\end{definition}

\begin{definition}
The edge weights define a reversible Markov chain $X_{n}\in \Gamma $, i.e. a
random walk on the weighted graph $(\Gamma ,\mu )$ with transition
probabilities 
\begin{align*}
P(x,y)& =\frac{\mu _{x,y}}{\mu (x)}, \\
P_{n}\left( x,y\right) & =\mathbb{P}(X_{n}=y|X_{0}=x).
\end{align*}%
The \textquotedblright heat kernel\textquotedblright\ of the random walk is\ 
\begin{equation*}
p_{n}(x,y)=p_{n}\left( y,x\right) =\frac{1}{\mu \left( y\right) }P_{n}\left(
x,y\right) .
\end{equation*}
\end{definition}

Let $\mathbb{P}_{x},\mathbb{E}_{x}$ denote the probability measure and
expected value with respect to the Markov chain $X_{n}$ \ if $X_{0}=x.$\ 

\begin{definition}
The Markov operator $P$ \ of the reversible Markov chain is naturally
defined by%
\begin{equation*}
Pf\left( x\right) =\sum P\left( x,y\right) f\left( y\right) .
\end{equation*}
\end{definition}

\begin{definition}
The Laplace operator on the weighted graph $\left( \Gamma ,\mu \right) $ is
defined simply as 
\begin{equation*}
\Delta =P-I.
\end{equation*}
\end{definition}

\begin{definition}
For $A\subset \Gamma $ consider $P^{A}$ the Markov operator $P$ restricted
to $A.$ This operator is the Markov operator of the killed Markov chain,
which is killed on leaving $A,$ also corresponds to the Dirichlet boundary
condition on $A$. \ Its iterates are denoted by $P_{k}^{A}$.
\end{definition}

\begin{definition}
The Laplace operator with Dirichlet boundary conditions on a finite set $%
A\subset \Gamma $ is defined as%
\begin{equation*}
\Delta ^{A}f\left( x\right) =\left\{ 
\begin{array}{ccc}
\Delta f\left( x\right) & \text{if} & x\in A \\ 
0 & if & x\notin A%
\end{array}%
\right. .
\end{equation*}%
The smallest eigenvalue of $-\Delta ^{A}$ is denoted in general by $\lambda
(A)$ and for $A=B(x,R)$ it is denoted by $\lambda =\lambda (x,R)=\lambda
(B(x,R)).$
\end{definition}

\begin{definition}
On the weighted graph $\left( \Gamma ,\mu \right) $ the inner product is
defined as 
\begin{equation*}
\left( f,g\right) =\left( f,g\right) _{\mu }=\sum_{x\in \Gamma }f\left(
x\right) g\left( x\right) \mu \left( x\right) .
\end{equation*}
\end{definition}

\begin{definition}
The energy or Dirichlet form $\mathcal{E}\left( f,f\right) $ associated to
the Laplace operator $\Delta $ is defined as 
\begin{equation*}
\mathcal{E}\left( f,f\right) =-\left( \Delta f,f\right) =\frac{1}{2}%
\sum_{x,y\in \Gamma }\mu _{x,y}\left( f\left( x\right) -f\left( y\right)
\right) ^{2}.
\end{equation*}
\end{definition}

Using this notation the smallest eigenvalue of $-\Delta ^{A}$ can be defined
by 
\begin{equation}
\lambda \left( A\right) =\inf \left\{ \frac{\mathcal{E}\left( f,f\right) }{%
\left( f,f\right) }:f\in c_{0}\left( A\right) ,f\neq 0\right\}  \label{ldef}
\end{equation}%
as well.

The exit time from a set $A\subset \Gamma $ is 
\begin{equation*}
T_{A}=\min \{k\geq 0:X_{k}\in \Gamma \backslash A\}
\end{equation*}%
and its expected value is denoted by 
\begin{equation*}
E_{x}(A)=\mathbb{E}(T_{A}|X_{0}=x)
\end{equation*}%
and we will use the short notations $E=E(x,R)=E_{x}(x,R)=E_{x}\left( B\left(
x,R\right) \right) $.

The main task of this paper is to find estimates of the heat kernel. \ Such
estimates have a vast literature (see the bibliography of \cite{CG1} as a
starting point).

The diagonal upper estimate 
\begin{equation*}
p_{n}\left( x,x\right) \leq Cn^{-\gamma }
\end{equation*}%
is equivalent to the Faber-Krahn inequality%
\begin{equation*}
\lambda ^{-1}\left( A\right) \leq C\mu \left( A\right) ^{\delta }\text{ for
all }A\subset \Gamma
\end{equation*}%
for some $\gamma ,\delta ,C>0$ (c.f. \cite{Carr},\cite{Gfk}).

The classical off-diagonal upper estimate has the form 
\begin{equation*}
p_{n}\left( x,x\right) \leq \frac{C_{d}}{n^{\frac{d}{2}}}\exp \left[ -\frac{%
d^{2}\left( x,y\right) }{2n}\right]
\end{equation*}%
for the random walk on the integer lattice $\mathbb{Z}^{d}$, which reflects
the basic fact that%
\begin{equation*}
E\left( x,R\right) \simeq R^{2}.
\end{equation*}%
Here and in the whole sequel $c,C$ will denote unimportant constants, their
values may change from place to place. Coulhon and Grigor'yan \cite{CG}
proved for random walks on weighted graphs that the relative Faber-Krahn
inequality 
\begin{equation*}
\lambda ^{-1}\left( A\right) \leq CR^{2}\left( \frac{\mu \left( A\right) }{%
V\left( x,R\right) }\right) ^{\delta }\text{ for all }A\subset B\left(
x,R\right) ,x\in \Gamma ,R>0
\end{equation*}%
is equivalent to the conjunction of the volume doubling property $\left( \ref%
{VD}\right) $ and$\ $%
\begin{equation*}
p_{n}\left( x,y\right) \leq \frac{C}{V\left( x,\sqrt{n}\right) }\exp \left[
-c\frac{d^{2}\left( x,y\right) }{n}\right] .
\end{equation*}

In the last fifteen years several works were devoted to the study of
sub-diffusive behavior of fractals, which typically means that the condition 
$\left( E_{\beta }\right) $ 
\begin{equation}
E\left( x,R\right) \simeq R^{\beta }  \label{ebeta}
\end{equation}%
for a $\beta >2\ $is satisfied. \ On particular fractals it was possible to
show that \ the following heat kernel upper bound $\left( UE_{\beta }\right) 
$ holds:%
\begin{equation}
p_{t}\left( x,y\right) \leq \frac{C}{V\left( x,t^{\frac{1}{\beta }}\right) }%
\exp \left[ -c\left( \frac{R^{\beta }}{t}\right) ^{\frac{1}{\beta -1}}\right]
.  \label{UEbb}
\end{equation}
Grigor'yan has shown in \cite{G1}\ that in continuous settings under the
volume doubling condition $\left( UE_{\beta }\right) $ is equivalent to the
conjunction of $\left( E_{\beta }\right) $ and%
\begin{equation*}
\lambda ^{-1}\left( A\right) \leq CR^{\beta }\left( \frac{\mu \left(
A\right) }{V\left( x,R\right) }\right) ^{\delta }\text{ for all }A\subset
\Gamma ,x\in \Gamma ,R>0,
\end{equation*}%
The upper estimate $\left( UE_{\beta }\right) $ has been shown for several
particular fractals prior to \cite{G1} (see the literature in \cite{K1} or
for very recent ones in \cite{G1}, \cite{K}, \cite{GT2} or \cite{Tfull}) and
generalized to some class of graphs in \cite{TD} and \cite{Tfull}. In \cite%
{GT2} an example is given for a graph which satisfies $\left( UE_{\beta
}\right) $ and the lower counterpart (differing only in the constants $C,c$%
). This example is an easy modification of the Vicsek tree .
\bigskip 

One should put increasing weights on the edges of increasing blocks of the
tree .
It is easy to see that on this tree the volume doubling condition and $%
\left( E_{\beta }\right) $ holds. Another construction based on the Vicsek
tree is the stretched Vicsek tree, which is given in \cite%
{Tfull} and it violates $\left( E_{\beta }\right) $ while it satisfies $%
\left( VD\right) $. It can be obtained by replacing the edges of the
consecutive block of the tree with paths of slowly increasing length. 

It was shown in \cite{Tfull} that this example is not covered by any earlier
results but satisfies enough regularity properties to obtain a heat kernel
upper estimate which is local not only in the volume but in the mean exit
time as well. We shall return to this example briefly in Section \ref%
{sexample}.

The main result of the present paper gives equivalent isoperimetric
inequalities which imply on- and off-diagonal upper estimates in a general
form. Let us give here only one, the others will be stated after the
necessary definitions.

The result states among others that if there are $C,\delta >0$ such that for
all $x\in \Gamma ,R,n>0$ if for all $A\subset B\left( x,3R\right) ,B=B\left(
x,R\right) ,2B=B\left( x,2R\right) $%
\begin{equation*}
\lambda ^{-1}\left( A\right) \leq CE\left( x,R\right) \left( \frac{\mu
\left( A\right) }{\mu \left( 2B\right) }\right) ^{\delta }
\end{equation*}%
holds, then the (local) diagonal upper estimate $\left( DUE\right) $ holds:
there is a $C>0,$ such that for all $x\in \Gamma ,n>0$%
\begin{equation}
p_{n}\left( x,x\right) \leq \frac{C}{V\left( x,e\left( x,n\right) \right) },
\tag{DUE}  \label{DUE1}
\end{equation}%
and the (local) upper estimate $\left( UE\right) $ holds: there are $%
c,C>0,\beta >1$ such that for all $x,y\in \Gamma ,n>0$%
\begin{equation}
p_{n}\left( x,y\right) \leq \frac{C}{V\left( x,e\left( x,n\right) \right) }%
\exp \left[ -c\left( \frac{E\left( x,d\left( x,y\right) \right) }{n}\right)
^{^{\frac{1}{\beta -1}}}\right] .  \tag{$UE$}  \label{UE1}
\end{equation}%
Here $e\left( x,n\right) $ is the inverse of $E\left( x,R\right) $ in the
second variable. \ The existence follows easily from the strong Markov
property (c.f. \cite{tER}). The full result contains the corresponding
reverse implications as well.

The presented results are motivated by the work of Kigami \cite{K} and
Grigor'yan \cite{G1}. Those provide necessary and sufficient conditions for
the case when $E\left( x,R\right) \simeq R^{\beta }$ \ uniformly in the
space (they work in the continuous settings on measure metric spaces). Our
result is an adaptation to the discrete settings and generalization of the
mentioned works relaxing the condition on the mean exit time. It seems that
the results carry over to the continuous setup without major changes
provided the stochastic process has some natural properties (which among
others imply that it has continuous heat kernel, c.f. \cite{G1}).

The structure of the paper is the following. \ In Section \ref{sdefs} we lay
down the necessary definitions and give the statement of the main results.
In Section \ref{sineq} some potential theoretical inequalities are collected
and equivalence of the isoperimetric inequalities are given. In Section \ref%
{sfk} the proof of the main result is presented. \ Finally Section \ref%
{sexample} provides further details of the example of the stretched Vicsek
tree.\ 

\section{Basic definitions and the results}

\setcounter{equation}{0}$\label{sdefs}$We consider the weighted graph $%
\left( \Gamma ,\mu \right) $ as it was introduced in the previous section.

\begin{condition}
In many statements we assume that condition $\mathbf{(p}_{0}\mathbf{)}$
holds, that is there is an universal $p_{0}>0$ such that for all $x,y\in
\Gamma ,x\sim y$ 
\begin{equation}
\frac{\mu _{x,y}}{\mu (x)}\geq p_{0},  \label{p0}
\end{equation}
\end{condition}

\begin{notation}
The following standard notations will be used.%
\begin{equation*}
\left\Vert f\right\Vert _{1}=\sum_{x\in \Gamma }\left\vert f\left( x\right)
\right\vert \mu \left( x\right)
\end{equation*}%
and%
\begin{equation*}
\left\Vert f\right\Vert _{2}=\left( f,f\right) ^{1/2}.
\end{equation*}
\end{notation}

\begin{definition}
\label{defG}We introduce 
\begin{equation*}
G^{A}(y,z)=\sum_{k=0}^{\infty }P_{k}^{A}(y,z)
\end{equation*}%
the local Green function, the Green function of the killed walk and the
corresponding Green kernel as 
\begin{equation*}
g^{A}(y,z)=\frac{1}{\mu \left( z\right) }G^{A}(y,z).
\end{equation*}
\end{definition}

\begin{definition}
Let $\partial A$ denote the boundary of a set $A\subset \Gamma :$ $\partial
A=\{z\in \Gamma \backslash A:z\sim y\in A$ $\}.$ The closure of $A$ will be
denoted by $\overline{A}$ and defined by $\overline{A}=A\cup \partial A$,
also let $A^{c}=\Gamma \backslash A.$
\end{definition}

\begin{notation}
For two real series $a_{\xi },b_{\xi },\xi \in S$ we shall use the notation $%
a_{\xi }\simeq b_{\xi }$ \ if there is a $C>1$ such that for all $\xi \in S$%
\begin{equation*}
C^{-1}a_{\xi }\leq b_{\xi }\leq Ca_{\xi }.
\end{equation*}
\end{notation}

For convenience we introduce a short notation for the volume of the annulus $%
B\left( x,R\right) \backslash B\left( x,r\right) $ for $R>r>0$: 
\begin{equation*}
v(x,r,R)=V(x,R)-V(x,r).
\end{equation*}

\begin{definition}
The extreme mean exit time is defined as 
\begin{equation*}
\overline{E}(A)=\max_{x\in A}E_{x}(A)
\end{equation*}%
and the $\overline{E}(x,R)=\overline{E}(B(x,R))$ simplified notation will be
used.
\end{definition}

\begin{definition}
We say that the graph satisfies condition $\left( \overline{E}\right) $ if
there is a $C>0$ such that for all $x\in \Gamma ,R>0$%
\begin{equation*}
\overline{E}\left( x,R\right) \leq CE\left( x,R\right) .
\end{equation*}
\end{definition}

\begin{definition}
We will say that the weighted graph $(\Gamma ,\mu )$ satisfies the time
comparison principle $\left( \mathbf{TC}\right) $ if there is a constant $%
C>1 $ such that for all $x\in \Gamma $ and $R>0,y\in B\left( x,R\right) $%
\begin{equation}
\frac{E(y,2R)}{E\left( x,R\right) }\leq C.  \label{TC}
\end{equation}
\end{definition}

\begin{remark}
It is clear that $\left( TC\right) $ implies $\left( \overline{E}\right) .$
\end{remark}

\begin{definition}
For any two disjoint sets, $A,B\subset \Gamma ,$ the resistance between them 
$\rho (A,B)$ is defined as 
\begin{equation}
\rho (A,B)=\left( \inf \left\{ \mathcal{E}\left( f,f\right)
:f|_{A}=1,f|_{B}=0\right\} \right) ^{-1}  \label{resdef}
\end{equation}%
and we introduce 
\begin{equation*}
\rho (x,r,R)=\rho (B(x,r),\Gamma \backslash B(x,R))
\end{equation*}%
for the resistance of the annulus about $x\in \Gamma ,$ with $R>r>0$.
\end{definition}

\begin{theorem}
\label{tmain}Assume that $(\Gamma ,\mu )$ satisfies $(p_{0})$. Then the
following inequalities are equivalent ( assuming that each statement
separately holds for all $x,y\in \Gamma ,$ $R>0,$ $n>0,$ $D\subset A\subset
B=B\left( x,3R\right) $ with fixed independent $\delta ,C>0,\beta >1$) .%
\begin{equation}
\overline{E}(A)\leq CE\left( x,R\right) \left( \frac{\mu \left( A\right) }{%
\mu \left( B\right) }\right) ^{\delta },  \tag{E}  \label{sFKE}
\end{equation}%
\begin{equation}
\lambda (A)^{-1}\leq CE\left( x,R\right) \left( \frac{\mu \left( A\right) }{%
\mu \left( B\right) }\right) ^{\delta },  \tag{FK}  \label{sFKll}
\end{equation}%
\begin{equation}
\rho (D,A)\mu \left( D\right) \leq CE\left( x,R\right) \left( \frac{\mu
\left( A\right) }{\mu \left( B\right) }\right) ^{\delta },  \tag{$\rho $}
\label{sFKrr}
\end{equation}%
\newline
\begin{equation}
p_{n}(x,x)\leq \frac{C}{V\left( x,e\left( x,n\right) \right) }\text{ } 
\tag{DUE}  \label{DUE}
\end{equation}%
together with $\left( VD\right) $ and $\left( TC\right) ,$%
\begin{equation}
p_{n}\left( x,y\right) \leq \frac{C}{V\left( x,e\left( x,n\right) \right) }%
\exp \left( -c\left( \frac{E\left( x,R\right) }{n}\right) ^{\frac{1}{\beta -1%
}}\right) \text{ }  \tag{UE}  \label{LUE1}
\end{equation}%
together with $\left( VD\right) $ and $\left( TC\right) $, where $e(x,n)$ is
the inverse of $E\left( x,R\right) $ in the second variable, $B=B\left(
x,2R\right) .$
\end{theorem}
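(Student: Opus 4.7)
The plan is to establish the theorem through a cycle of implications: first proving the three isoperimetric inequalities $(E)$, $(FK)$, $(\rho)$ equivalent by potential-theoretic arguments, and then linking them to the heat kernel estimates $(DUE)$ and $(UE)$. For the isoperimetric cycle, the key identity is $E_x(A) = G^A \mathbf{1}_A(x)$, so that $\overline{E}(A) = \|G^A \mathbf{1}_A\|_\infty$ while $\lambda(A)^{-1}$ equals the $L^2$-operator norm of $G^A$. Under $(p_0)$ these two norms of $G^A$ become comparable via a standard iteration (two applications of the resolvent convert $L^\infty$ to $L^2$ and back), yielding $(E) \Leftrightarrow (FK)$. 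The equivalence with $(\rho)$ is then obtained from capacitor-type inequalities relating $\rho(D, A)$ to the Green kernel $g^A$ and to the mean exit time, combined with a dyadic decomposition over level sets of $E_{\cdot}(A)$, using the potential-theoretic machinery collected in the preceding section.

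Next I would show $(FK) \Rightarrow (DUE) + (VD) + (TC)$ by the Nash--Moser method of Coulhon and Grigor'yan, adapted to the non-polynomial rescaling through $e(x,n)$. Plugging super-level sets of $P_n^A f$ into $(FK)$ and using Cauchy--Schwarz yields a discrete differential inequality for $\|P_n^A f\|_2^2$ which integrates to $p_n(x,x) \le C/V(x,e(x,n))$. The volume doubling $(VD)$ would come from applying $(FK)$ with $A$ taken to be an annulus $B(x, 2R) \setminus B(x, R)$: the trivial lower bound $\overline{E}(A) \ge 1$ combined with $(FK)$ forces $V(x, 2R) \lesssim V(x, R)$. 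The time comparison $(TC)$ follows by comparing $E(y, 2R)$ with $E(x, R)$ for $y \in B(x, R)$, exploiting that $B(y, 2R) \subset B(x, 3R)$ and applying $(E)$ to $B(y, 2R)$ inside this larger ball.

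The passage $(DUE) \Rightarrow (UE)$ is the heart of the off-diagonal analysis. I would deploy the generalized Davies--Gaffney inequality announced in the introduction, which controls $(P_n \mathbf{1}_U, \mathbf{1}_W)_\mu$ in terms of $d(U, W)$ and $n$ for disjoint sets $U, W$. Combined with the semigroup identity $p_n(x, y) = \sum_z p_k(x, z) p_{n-k}(z, y) \mu(z)$ and $(DUE)$, a chaining/optimization argument over a sequence of intermediate scales produces the Gaussian-type factor $\exp(-c(E(x, R)/n)^{1/(\beta - 1)})$. The exponent $\tfrac{1}{\beta - 1}$ arises from balancing the number of chain links against the per-link diffusive and exit-time costs, and recovers Barlow's exponent in the polynomial case $E \simeq R^\beta$.

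To close the cycle I would show $(DUE) \Rightarrow (FK)$ (which, combined with the trivial $(UE)\Rightarrow (DUE)$, finishes the loop) by testing $p_n$ against the $L^2$-normalized ground state $\varphi_A$ of $-\Delta^A$: from $e^{-n \lambda(A)} \|\varphi_A\|_2^2 = (P_n^A \varphi_A, \varphi_A) \le C \mu(A)/V(x, e(x,n))$ and an optimal choice of $n$ in terms of $\mu(A)/\mu(B)$, one recovers $(FK)$. I expect the main obstacle to be the derivation of $(TC)$ from the isoperimetric conditions in the absence of the uniform polynomial scaling $(E_\beta)$, together with the careful chaining in $(DUE) \Rightarrow (UE)$: the Davies--Gaffney/finite-propagation argument must respect the varying function $E(x, R)$ rather than a uniform scaling exponent, and this requires meticulous bookkeeping of local volume and exit-time comparisons at every scale of the chain.
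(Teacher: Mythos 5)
Your cyclic architecture is correct in outline, and your reading of $(TC)$ from $(E)$ applied to $B(y,2R)\subset B(x,3R)$ matches the paper exactly. However there are three places where the proposal as written would not close.

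First, the claim that under $(p_0)$ the $L^\infty$- and $L^2$-operator norms of $G^A$ ``become comparable via a standard iteration'' is not correct: $\|G^A\|_{2\to 2}=\lambda(A)^{-1}\le \overline{E}(A)=\|G^A\|_{\infty\to\infty}$ always holds (this is the paper's Lemma \ref{lebar}), but the reverse bound $\overline{E}(A)\le C\lambda(A)^{-1}$ fails for general finite $A$ -- two resolvent applications cost a factor of $\mu(A)^{1/2}$ which is unbounded. The paper instead closes the isoperimetric loop through the resistance form: $\lambda(B)\,\rho(A,B^c)\,\mu(A)\le 1$ (Lemma \ref{llrv}) gives $(FK)\Rightarrow(\rho)$, and a separate iteration over level sets (Lemma \ref{lriter}) gives $(\rho)\Rightarrow(E)$. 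You need $(\rho)$ as the third vertex, not a direct $L^\infty$/$L^2$ comparison.

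Second, in $(FK)\Rightarrow(DUE)$ the Nash--Moser discrete differential inequality does \emph{not} simply integrate to $p_n(x,x)\le C/V(x,e(x,n))$ once the scaling is nonuniform. What Nash gives (the paper's Lemma \ref{lprep}, after a detour to the two-step graph $\Gamma^*$ to ensure $q(x,x)\ge c>0$) is only the Dirichlet bound $q_n^{B^*(x,R)}(y,y)\le C\,V(x,R)^{-1}\bigl(E(x,R)/n\bigr)^{1/\delta}$, i.e.\ a good estimate at the single time scale $n\asymp E(x,R)$. To promote this to the global $(DUE)$ one needs the first-exit decomposition (Lemma \ref{lcut}) and a geometric iteration over centres $x_i$ and shrinking radii $r_i\le\sigma^i r$, with the exit-time estimate of Theorem \ref{tebarimp} controlling the error terms; this iteration is the bulk of the proof of Theorem \ref{tFK} and cannot be skipped. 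Relatedly, your derivation of $(VD)$ from $(FK)$ applied to the annulus $B(x,2R)\setminus B(x,R)$ does not work: the trivial lower bound there leaves an uncontrolled factor $E(x,R)$. The paper's route is to take $A=B(x,R)$ itself, use $E(x,R)\le\overline{E}(B(x,R))$ to cancel $E(x,R)$, and read off $V(x,3R)\le C^{1/\delta}V(x,R)$ directly.

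Third, for $(DUE)\Rightarrow(UE)$ the paper does not chain Davies--Gaffney against the semigroup identity directly; it first proves $(DUE)\Rightarrow(PMV)$ (parabolic mean value inequality), and then combines two applications of $(PMV)$ with the generalized Davies--Gaffney inequality $(DG)$ (Theorem \ref{tGDI}, Proposition \ref{tLUEvv}), closing with the volume-ratio estimate of Lemma \ref{lvv} to remove the asymmetry $\sqrt{V(x,e(x,n))V(y,e(y,n))}\to V(x,e(x,n))$. Your chaining sketch is not wrong in spirit, but without $(PMV)$ as an intermediary you would have to re-derive in the chaining step essentially the content of Lemma \ref{lvv} and Theorem \ref{tDUE->PMV}; the paper's factorization is what makes the ``meticulous bookkeeping'' you correctly anticipate tractable.
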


\begin{corollary}
$\label{cmain}$If $\left( \Gamma ,\mu \right) $ satisfies $\left(
p_{0}\right) ,\left( VD\right) $ and $\left( TC\right) $ then the following
statements are equivalent. (Assuming $\ $that each statement separately
holds for all $x,y\in \Gamma ,R>0,n>0,D\subset A\subset B=B\left(
x,2R\right) $ with fixed $\delta ,C>0,\beta >1$). 
\begin{equation}
\frac{\overline{E}\left( A\right) }{\overline{E}\left( B\right) }\leq
C\left( \frac{\mu \left( A\right) }{\mu \left( B\right) }\right) ^{\delta },
\label{FKE}
\end{equation}%
\begin{equation}
\frac{\lambda ^{-1}\left( A\right) }{\lambda ^{-1}\left( B\right) }\leq
C\left( \frac{\mu \left( A\right) }{\mu \left( B\right) }\right) ^{\delta },
\label{fkll}
\end{equation}%
\begin{equation}
\frac{\rho (D,A)}{\rho \left( x,R,2R\right) }\leq C\left( \frac{\mu \left(
A\right) }{\mu \left( D\right) }\right) ^{\delta }\left( \frac{\mu \left(
D\right) }{\mu \left( B\right) }\right) ^{\delta -1},  \label{FKrr}
\end{equation}%
\begin{equation}
p_{n}(x,x)\leq \frac{C}{V\left( x,e\left( x,n\right) \right) },
\label{duecorr}
\end{equation}%
\begin{equation}
p_{n}\left( x,y\right) \leq \frac{C}{V\left( x,e\left( x,n\right) \right) }%
\exp \left( -c\left( \frac{E\left( x,R\right) }{n}\right) ^{\frac{1}{\beta -1%
}}\right) \text{.}
\end{equation}
\end{corollary}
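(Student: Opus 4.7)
The strategy is to show that, under $(p_0)$, $(VD)$ and $(TC)$, each of the five conditions in the corollary is equivalent, up to multiplicative constants, to its counterpart in Theorem \ref{tmain}; the equivalences of the corollary then follow from that theorem.

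The key comparability estimates that carry the argument are the following. From $(TC)$, which implies $(\overline{E})$, one obtains
\begin{equation*}
\overline{E}(B(x,R)) \simeq \overline{E}(B(x,2R)) \simeq \overline{E}(B(x,3R)) \simeq E(x,R).
\end{equation*}
The potential-theoretic inequalities collected in Section \ref{sineq} yield, under $(p_0)$, $(VD)$ and $(TC)$, the Einstein-type chain
\begin{equation*}
E(x,R) \simeq \lambda^{-1}(B(x,R)) \simeq \rho(x,R,2R)\,\mu(B(x,R)),
\end{equation*}
while $(VD)$ gives $\mu(B(x,R)) \simeq \mu(B(x,2R)) \simeq \mu(B(x,3R))$. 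Substituting these into (\ref{FKE}) and (\ref{fkll}) and adjusting the constants via $(VD)$ reproduces (\ref{sFKE}) and (\ref{sFKll}), and the reverse substitution gives the converses. For the resistance condition, multiplying (\ref{FKrr}) by $\rho(x,R,2R)$ and invoking $\rho(x,R,2R)\mu(B(x,R)) \simeq E(x,R)$ together with elementary algebra in the two volume ratios produces (\ref{sFKrr}); the converse is the same rearrangement read backward. The two heat kernel estimates in the corollary are literally the formulas (\ref{DUE}) and (\ref{LUE1}) of the theorem.

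The one bookkeeping subtlety is that Theorem \ref{tmain} quantifies over $A \subset B(x,3R)$ while the corollary quantifies over $A \subset B(x,2R)$. The implication from the theorem to the corollary is immediate by inclusion. For the converse direction, given $A \subset B(x,3R)$ one applies the corollary at the scale $R' = 3R/2$, so that $B(x,3R) = B(x,2R')$, and then invokes $(TC)$ and $(VD)$ to return to the scale $R$ at the cost of absorbing a constant into $C$. I expect the main technical obstacle to be the verification of the Einstein-type relation $E(x,R) \simeq \rho(x,R,2R)\mu(B(x,R))$ under the stated hypotheses, which is the pivotal input for recasting the resistance inequality; once this and the companion comparabilities are in place, the remainder is substitution and constant-chasing.
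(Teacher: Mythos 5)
Your proposal matches the paper's own proof (Proposition \ref{psumm}, together with Propositions \ref{p3eq} and \ref{pskew}): the paper likewise converts between the two families of inequalities by replacing $E(x,R)$ on the right-hand side using the Einstein-type chain of Theorem \ref{tallcc}, $\lambda^{-1}(x,2R)\asymp E(x,2R)\asymp\overline{E}(x,2R)\asymp\rho(x,R,2R)\,v(x,R,2R)$ (with $v(x,R,2R)\simeq V(x,R)$ under $(VD)$), and likewise reconciles the $B(x,3R)$ versus $B(x,2R)$ quantification by choosing $R'=\tfrac{3}{2}R$. The Einstein-type relation you flag as the pivotal technical input is exactly what the paper imports as Theorem \ref{tallcc} from \cite{tER}, so your route is the same as the author's.
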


\begin{remark}
In a related work \cite{Tfull} , among other equivalent conditions the
(elliptic) mean value inequality was used. It says that for all $u$
nonnegative harmonic functions in $B\left( x,R\right) $%
\begin{equation}
u\left( x\right) \leq \frac{C}{V\left( x,R\right) }\sum_{y\in B\left(
x,R\right) }u\left( y\right) \mu \left( y\right) .  \label{mviff}
\end{equation}%
It was shown in \cite{Tfull} that under $\left( p_{0}\right) +\left(
VD\right) +\left( TC\right) $ the mean value inequality is equivalent to the
diagonal upper estimate $\left( DUE\right) $. This means that the mean value
inequality is equivalent to the relative isoperimetric inequalities $\left(
E\right) ,\left( FK\right) ,\left( \rho \right) $ \ and $\left( \ref{FKE}-%
\ref{FKrr}\right) $ provided $\left( VD\right) $ and $\left( TC\right) $
holds. \ In \cite{GT3} a direct proof of $\left( MV\right) \Longrightarrow
\left( FK\right) $ is given for measure metric spaces which works for
weighted graphs as well.
\end{remark}

\section{Basic inequalities}

\setcounter{equation}{0}$\label{sineq}$ In this section basic inequalities
are collected several of them are known, some of them are new.

\begin{lemma}
(c.f. \cite{CG} ) If $\left( p_{0}\right) $ and $(VD)$ hold then for all $%
x\in \Gamma ,R>0,$ $y\in B\left( x,R\right) $ 
\begin{equation}
\frac{V(x,2R)}{V(y,R)}\leq C,  \label{PD2V}
\end{equation}%
furthermore there is an $A_{V}$ such that for all $x\in \Gamma ,R>0$%
\begin{equation}
2V(x,R)\leq V(x,A_{V}R),  \label{PD3V}
\end{equation}%
\begin{equation}
V(x,MR)-V(x,R)\simeq V(x,R)  \label{V3}
\end{equation}%
for any fixed $M\geq 2$, and there is an $\alpha >0$ \ such that for all $%
y\in B\left( x,R\right) ,S\leq R$%
\begin{equation}
\frac{V\left( x,R\right) }{V\left( y,S\right) }\leq C\left( \frac{R}{S}%
\right) ^{\alpha }.  \label{VDaa}
\end{equation}
\end{lemma}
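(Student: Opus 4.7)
The plan is to handle the four statements in cascade. The first two, (\ref{PD2V}) and (\ref{VDaa}), follow directly from $(VD)$ and the triangle inequality, while (\ref{PD3V}) and (\ref{V3}) rest on a reverse volume doubling step for which $(p_0)$ is essential.

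For (\ref{PD2V}), given $y\in B(x,R)$ the triangle inequality yields $B(x,2R)\subset B(y,3R)\subset B(y,4R)$, and two applications of $(VD)$ deliver $V(x,2R)\le V(y,4R)\le D_V^{2}V(y,R)$. For (\ref{VDaa}), for $y\in B(x,R)$ and $S\le R$ one has $B(x,R)\subset B(y,2R)$, so $V(x,R)\le V(y,2R)$; taking $k=\lceil\log_{2}(2R/S)\rceil$ iterations of $(VD)$ downward gives $V(y,2R)\le D_V^{k}V(y,S)$, which reads $V(x,R)\le C(R/S)^{\alpha}V(y,S)$ with $\alpha=\log_{2}D_V$.

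The crux is (\ref{PD3V}). Here $(p_0)$ gives $\deg(x)\le 1/p_0$, so every ball in $\Gamma$ is finite; since $\Gamma$ is infinite and connected, there exists a vertex $z$ with $d(x,z)=\lceil 2R\rceil$, so $2R\le d(x,z)\le 3R$ for $R\ge 1$. The triangle inequality then yields $B(z,R)\cap B(x,R)=\emptyset$ and $B(z,R)\cup B(x,R)\subset B(x,4R)$, so $V(x,4R)\ge V(x,R)+V(z,R)$. To bound $V(z,R)$ from below I would use the reverse inclusion $B(x,R)\subset B(z,4R)$ and iterate $(VD)$ downward: $V(x,R)\le V(z,4R)\le D_V^{2}V(z,R)$, i.e.\ $V(z,R)\ge D_V^{-2}V(x,R)$. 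Consequently $V(x,4R)\ge(1+D_V^{-2})V(x,R)$, and iterating $k$ times yields $V(x,4^{k}R)\ge(1+D_V^{-2})^{k}V(x,R)\ge 2V(x,R)$ for $k$ large enough, whence we set $A_V=4^{k}$.

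For (\ref{V3}), the upper bound $V(x,MR)-V(x,R)\le V(x,MR)\le C_M V(x,R)$ is $(VD)$ iterated $O(\log M)$ times. The lower bound recycles the annulus trick from (\ref{PD3V}): for fixed $M\ge 2$ one picks $z$ on a geodesic from $x$ at a distance between $R$ and $MR$ such that $B(z,cR)\subset B(x,MR)\setminus B(x,R)$ for some $c=c(M)>0$, and $(VD)$ then gives $V(z,cR)\ge c'V(x,R)$. The main obstacle is the reverse volume doubling in (\ref{PD3V}): $(VD)$ alone is a one-sided growth condition and is a priori compatible with stagnant volumes. The leverage that breaks this is the combined use of $(p_0)$---which forces bounded degree, hence finite balls, and thus, together with $\Gamma$ infinite and connected, the existence of points at any prescribed distance from $x$---and the two-sided triangle inequality transfer via (\ref{PD2V}) that lets one compare $V(z,R)$ with $V(x,R)$ from below.
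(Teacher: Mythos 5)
Your argument is correct and is essentially the standard one: the paper itself gives no proof (it defers to Coulhon--Grigor'yan), and the cascade you lay out --- triangle-inequality transfers plus $(VD)$ for (\ref{PD2V}) and (\ref{VDaa}), and then $(p_0)\Rightarrow$ bounded degree $\Rightarrow$ finite balls $\Rightarrow$ existence of vertices at any prescribed distance in an infinite connected graph, which is exactly what powers the reverse-doubling annulus comparison for (\ref{PD3V}) --- is precisely the argument one would extract from that reference. The only spot worth tightening is the lower half of (\ref{V3}): place $z$ near the middle of the annulus, say $d(x,z)\approx\tfrac{M+1}{2}R$ and $c\approx\tfrac{M-1}{4}$, so that $B(z,cR)$ clears both boundary spheres, and obtain $V(z,cR)\gtrsim V(x,R)$ from (\ref{VDaa}) with the roles of $x$ and $z$ reversed, just as in your (\ref{PD3V}) step.
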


\bigskip The inequality $\left( \ref{PD3V}\right) $ sometimes called
anti-doubling property. As we already mentioned $\left( \ref{VD}\right) $ is
equivalent to $\left( \ref{PD2V}\right) $ and it\ is again evident that both
are equivalent to the inequality 
\begin{equation}
\frac{V(x,R)}{V(y,S)}\leq C\left( \frac{R}{S}\right) ^{\alpha },
\label{vcomp}
\end{equation}%
where $\alpha =\log _{2}D_{V}$ and $d(x,y)<R$. The next Proposition is taken
from \cite{GT1} (see also \cite{Tfull})

\begin{proposition}
\label{Pregvol}\label{plocvol}If $\left( p_{0}\right) $ holds, then for all $%
x,y\in \Gamma $ and $R>0$ and for some $C>1$, 
\begin{equation}
V(x,R)\leq C^{R}\mu (x),  \label{vbound}
\end{equation}%
\begin{equation}
p_{0}^{d(x,y)}\mu (y)\leq \mu (x)  \label{mmccmm}
\end{equation}%
and for any $x\in \Gamma $ 
\begin{equation}
\left\vert \left\{ y:y\sim x\right\} \right\vert \leq \frac{1}{p_{0}}.
\label{deg}
\end{equation}
\end{proposition}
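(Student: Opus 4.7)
The three inequalities can be proved in the order (\ref{deg}), (\ref{mmccmm}), (\ref{vbound}), each being a short consequence of condition $(p_0)$ together with the symmetry of the edge weights.

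First I would dispose of the degree bound (\ref{deg}). Since $\mu(x)=\sum_{y\sim x}\mu_{x,y}$, dividing through by $\mu(x)$ gives $1=\sum_{y\sim x}\mu_{x,y}/\mu(x)\geq p_0\,|\{y:y\sim x\}|$ by $(p_0)$, which is exactly the claim. This already tells us that from any vertex there are at most $p_0^{-1}$ neighbors.

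Next I would prove (\ref{mmccmm}) by induction on $d(x,y)$. For the base step $x\sim y$, note that $\mu_{x,y}\leq \mu(x)$ (as one term in the defining sum for $\mu(x)$) and by symmetry $\mu_{x,y}=\mu_{y,x}\geq p_0\mu(y)$ from $(p_0)$ applied at $y$; combining gives $p_0\mu(y)\leq\mu(x)$. For the inductive step, pick a shortest path $x=x_0\sim x_1\sim\dots\sim x_k=y$ with $k=d(x,y)$ and iterate the neighbor estimate along it. This yields $p_0^{d(x,y)}\mu(y)\leq \mu(x)$.

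Finally, for the volume bound (\ref{vbound}) I combine the two previous estimates. Using (\ref{deg}) inductively (or viewing the ball as contained in the set of endpoints of walks of length $<R$ from $x$), the cardinality of $B(x,R)$ satisfies $|B(x,R)|\leq p_0^{-R}$. Any $y\in B(x,R)$ has $d(x,y)<R$, so (\ref{mmccmm}) gives $\mu(y)\leq p_0^{-R}\mu(x)$. Therefore
\begin{equation*}
V(x,R)=\sum_{y\in B(x,R)}\mu(y)\leq |B(x,R)|\cdot p_0^{-R}\mu(x)\leq p_0^{-2R}\mu(x),
\end{equation*}
so $C=p_0^{-2}$ works.

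There is no real obstacle here; the only point worth being careful about is not mixing up the two ways in which $(p_0)$ is used, namely as a lower bound on $\mu_{x,y}/\mu(x)$ (giving the degree bound and, via symmetry, the neighbor-measure comparison) and as a crude upper bound on $\mu_{x,y}/\mu(x)\leq 1$ (which gives $\mu_{x,y}\leq\mu(x)$ trivially from the definition of $\mu(x)$). Everything else is induction along a geodesic.
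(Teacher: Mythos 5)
Your proof is correct and is the standard elementary argument. The paper itself does not prove this proposition but cites \cite{GT1} (see also \cite{Tfull}); your chain of deductions --- degree bound from $1=\sum_{y\sim x}\mu_{x,y}/\mu(x)\geq p_0\,\deg(x)$, then the one-step comparison $p_0\mu(y)\leq\mu_{y,x}\leq\mu(x)$ iterated along a geodesic, then the ball-volume bound by combining the cardinality estimate $|B(x,R)|\leq C^R$ with the pointwise measure comparison --- is exactly the expected route, and the small slack in constants (e.g. $|B(x,R)|$ is really bounded by $\sum_{j<R}(1/p_0)^j$ rather than $(1/p_0)^R$) is harmlessly absorbed into $C$.
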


Now we recall some results from \cite{tER} which connect the mean exit time,
the spectral gap, volume and resistance growth.

\begin{theorem}
\label{tallcc}$\left( p_{0}\right) ,\left( VD\right) $ and $\left( TC\right) 
$ \ implies that 
\begin{equation}
\lambda ^{-1}\left( x,2R\right) \asymp E\left( x,2R\right) \asymp \overline{E%
}\left( x,2R\right) \asymp \rho \left( x,R,2R\right) v\left( x,R,2R\right) .
\label{sER}
\end{equation}
\end{theorem}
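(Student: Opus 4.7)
The plan is to establish $\lambda^{-1}(x,2R)\asymp E(x,2R)\asymp \overline{E}(x,2R)\asymp \rho(x,R,2R)v(x,R,2R)$ as three independent comparisons: (a) $E\asymp\overline{E}$, (b) $\lambda^{-1}\asymp \overline{E}$, (c) $\overline{E}\asymp \rho\cdot v$. Piece (a) follows directly from $(TC)$: given $y\in B(x,2R)$ one has $B(x,2R)\subset B(y,4R)$, so $E_y(B(x,2R))\leq E(y,4R)\leq CE(x,2R)$, and taking the supremum in $y$ closes the loop.

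For piece (b), set $B=B(x,2R)$. The lower bound $\lambda^{-1}\leq \overline{E}(B)$ comes from the first-eigenfunction argument: taking $\phi\geq 0$ a first Dirichlet eigenfunction of $-\Delta^B$ with maximizer $y_0$, iterating $P^B\phi=(1-\lambda)\phi$ and summing yields
\[
\frac{\phi(y_0)}{\lambda}=\sum_{k\geq 0}(P^B)^k\phi(y_0)\leq \|\phi\|_\infty\sum_{k\geq 0}\mathbb{P}_{y_0}(T_B>k)=\phi(y_0)E_{y_0}(B).
\]
For $\overline{E}(B)\leq C/\lambda$, I would combine the spectral bound $\|(P^B)^k\|_{2\to 2}\leq (1-\lambda)^k$ with the submultiplicativity $\alpha_{j+k}\leq \alpha_j\alpha_k$ of $\alpha_k:=\sup_{y\in B}\mathbb{P}_y(T_B>k)$, using $(p_0)$ to translate $\ell^2$ decay to $\ell^\infty$ decay; this gives $\alpha_k\leq C(1-\lambda)^k$, whose sum is $O(1/\lambda)$.

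For piece (c), the two key ingredients are the identity $g^B(z,z)=\rho(\{z\},B^c)$, coming from the return-time representation of the Green kernel and the Dirichlet-principle definition of $\rho$, together with the integral formula $E_x(B)=\sum_z g^B(x,z)\mu(z)$. My plan is to use the equilibrium potential of the pair $(B(x,R),B(x,2R)^c)$ as a test function, exploit the harmonicity of $g^B(x,\cdot)$ off $x$, and apply $(VD)$ and $(TC)$ to show that $g^B(x,\cdot)$ is comparable to $\rho(x,R,2R)$ on a macroscopic portion of the annulus $B(x,2R)\setminus B(x,R)$; integrating then yields $E(x,2R)\asymp \rho(x,R,2R)v(x,R,2R)$.

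The main obstacle is precisely piece (c): while (a) and (b) are essentially routine once $(TC)$ and the Dirichlet spectrum of the killed chain are in hand, passing from the diagonal identity $g^B(z,z)=\rho(\{z\},B^c)$ to the annular comparison requires carefully handling multiple crossings of the inner sphere $\partial B(x,R)$ and using $(VD)$, $(TC)$ uniformly to guarantee that the Green function is approximately constant on a large part of the annulus. This is the Einstein-relation-type content on the annulus and forms the technical heart of the proof.
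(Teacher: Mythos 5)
The paper does not prove Theorem~\ref{tallcc}: it is imported verbatim from \cite{tER}, so there is no in-text proof to compare against. Judged on its own, your plan is partly right but misorganized in a way that creates a real gap.

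Piece (a) is correct: for $y\in B(x,2R)$ one has $B(x,2R)\subset B(y,4R)$, so $E_y(B(x,2R))\le E(y,4R)\le CE(x,2R)$ by $(TC)$, and $\overline E\le CE$ follows.

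Piece (b) is where the trouble is. The one-line eigenfunction argument giving $\lambda^{-1}(B)\le\overline E(B)$ is fine (it is Lemma~\ref{lebar}), but the converse $\overline E(B)\le C\lambda^{-1}(B)$ with a constant $C$ independent of $B$ \emph{cannot} be obtained by the abstract route you propose. First, the operator-norm bound $\|(P^B)^k\|_{2\to2}\le(1-\lambda)^k$ is false in general: $P^B$ is self-adjoint and bounded by $1$, but without a loop condition its spectrum may reach down to $-1$, so the norm is $\max\bigl(1-\lambda,\,|\min\operatorname{spec}(P^B)|\bigr)$, not $1-\lambda$. Second, and more seriously, translating $\ell^2$ decay of $(P^B)^k$ into $\ell^\infty$ decay of $\alpha_k=\sup_y\mathbb P_y(T_B>k)$ costs a factor depending on the "condition number'' of $B$ (something like $\sqrt{\mu(B)/\mu_{\min}}$), which under $(p_0)$ alone can grow exponentially in $R$ by $(\ref{vbound})$. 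Submultiplicativity gives $\alpha_k^{1/k}\to 1-\lambda$ but does not control the prefactor. Indeed, the inequality $\overline E(B)\le C\lambda^{-1}(B)$ is simply not true for arbitrary finite sets; it is a consequence of $(VD)$ and $(TC)$, and any proof must use that geometric structure. Your plan treats (b) as routine when it is in fact just as hard as (c).

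The efficient way around this is to notice that (b) need not be proved at all: with Lemma~\ref{llrv} applied to $A=B(x,R)$, $B=B(x,2R)$ you get $\rho(x,R,2R)V(x,R)\le\lambda^{-1}(x,2R)$, and by $(VD)$ (via $(\ref{V3})$) $V(x,R)\asymp v(x,R,2R)$. Combining with Lemma~\ref{lebar} and your (a), the chain
\[
\rho(x,R,2R)\,v(x,R,2R)\;\le\;C\lambda^{-1}(x,2R)\;\le\;C\overline E(x,2R)\;\le\;C'E(x,2R)
\]
reduces the whole theorem to the single missing inequality $E(x,2R)\le C\,\rho(x,R,2R)\,v(x,R,2R)$. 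That is precisely your piece (c), which you correctly identify as the Einstein-relation-type heart of the argument. I would caution that even within (c), the bound $g^B(x,\cdot)\lesssim\rho(x,R,2R)$ cannot hold pointwise near $x$ (since $g^B(x,x)=\rho(\{x\},B^c)\ge\rho(x,R,2R)$), so the actual proof must sum over dyadic annuli and use the anti-doubling of $E$ from $(TC)$ to control the contribution near the center; a sketch that relies on $g^B(x,\cdot)$ being approximately constant across the annulus understates the difficulty.

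In short: (a) is fine; (b) as an independent spectral argument is both incorrect as stated (negative eigenvalues, non-uniform $\ell^2\!\to\!\ell^\infty$ constant) and unnecessary; the entire content of the theorem beyond the three cited lemmas lives in the upper Einstein inequality $E\le C\rho v$, which you have only gestured at.
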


\begin{theorem}
\label{taDT}For a weighted graph $\left( \Gamma ,\mu \right) $ if%
\begin{equation}
\frac{E\left( x,R\right) }{E\left( y,R\right) }\leq C  \label{wTC}
\end{equation}%
for all $x\in \Gamma ,R\geq 0,y\in B\left( x,R\right) \left( VD\right) $ for
a fixed independent $C>0$ \ then there is an $A_{E}>1$ such that for all $%
x\in \Gamma ,R>0$%
\begin{equation}
E\left( x,A_{E}R\right) \geq E\left( x,R\right) .  \label{aDT}
\end{equation}
\end{theorem}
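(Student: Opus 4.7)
The plan is to establish an anti-doubling property for $E$, analogous to the volume anti-doubling \eqref{PD3V}: a bounded-factor enlargement of the radius should force $E(x,\cdot)$ to grow by a definite factor (at least $2$, which I take to be the intended content of \eqref{aDT}). The strategy is to run the walk out of $B(x,R)$, then use the strong Markov property to harvest a comparable additional exit-time contribution from a slightly larger ball, estimated from below via \eqref{wTC}.

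The core one-step estimate goes as follows. Fix $x\in\Gamma$ and $R\geq 1$, and let $T_R = T_{B(x,R)}$. The strong Markov property at $T_R$ gives
\begin{equation*}
E(x,3R) = E(x,R) + \mathbb{E}_x\bigl[\mathbb{E}_{X_{T_R}}[T_{B(x,3R)}]\bigr].
\end{equation*}
Since the walk is nearest-neighbour, $d(x,X_{T_R}) = R$, so $B(X_{T_R},2R)\subset B(x,3R)$ by the triangle inequality, and the inner expectation is therefore at least $E(X_{T_R},2R)$. To compare $E(X_{T_R},2R)$ with $E(x,R)$, I would apply \eqref{wTC} at radius $R+1$ (the smallest radius for which $X_{T_R}$ lies strictly inside the ball around $x$): this yields $E(X_{T_R},R+1)\geq E(x,R+1)/C$, and monotonicity of $E$ in the radius then gives $E(X_{T_R},2R)\geq E(x,R)/C$. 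Hence
\begin{equation*}
E(x,3R)\geq \left(1+\tfrac{1}{C}\right) E(x,R).
\end{equation*}

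The proof finishes by iteration: applying the same estimate at radii $3^jR$ for $j=0,1,\dots,k-1$ yields $E(x,3^kR)\geq (1+1/C)^k E(x,R)$, and choosing $k$ large enough that $(1+1/C)^k\geq 2$, one sets $A_E = 3^k$. The only real obstacle is that $X_{T_R}$ lies on $\partial B(x,R)$, just outside the reach of \eqref{wTC} at radius $R$; shifting $R\to R+1$ and invoking the monotonicity of $E$ bridges this gap cleanly. The hypothesis $(VD)$ does not appear explicitly in this argument and serves only as a standing background assumption guaranteeing the other structural properties of $(\Gamma,\mu)$.
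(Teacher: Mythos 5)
Your proof is correct. The paper itself does not include a proof of Theorem~\ref{taDT}: it recalls the statement from \cite{tER} without argument, so there is no internal proof to compare against. Your reconstruction --- the strong Markov decomposition of $T_{B(x,3R)}$ at $T_{B(x,R)}$, the inclusion $B(X_{T_R},2R)\subset B(x,3R)$, and the application of \eqref{wTC} at a slightly enlarged radius to push $E(X_{T_R},\cdot)$ back down to $E(x,\cdot)/C$, followed by iteration --- is a standard and complete route, once one reads \eqref{aDT} (as you correctly do) as the nontrivial anti-doubling inequality $E(x,A_ER)\geq 2E(x,R)$; the inequality as literally typeset is vacuous by monotonicity of $E$, so this reading is forced, and it is confirmed by the remark immediately after the theorem calling \eqref{aDT} the anti-doubling property. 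Two small comments. First, since $d(x,X_{T_R})=R<2R$, you can apply \eqref{wTC} directly at radius $2R$ to get $E(X_{T_R},2R)\geq E(x,2R)/C\geq E(x,R)/C$, which sidesteps the $R\to R+1$ shift and the extra monotonicity step; this also makes the constraint $R\geq 1$ (needed so that $2R\geq R+1$) unnecessary beyond the usual convention that radii are at least $1$ on graphs. Second, your observation that $(VD)$ plays no role in the argument is accurate --- the conclusion follows from \eqref{wTC} alone together with the strong Markov property and the triangle inequality, and the hypothesis $(VD)$ in the theorem statement appears to be a leftover of how the hypotheses were typeset rather than a genuine ingredient.
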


\begin{remark}
It is immediate from Theorem \ref{taDT} that $\left( TC\right) $ implies $%
\left( \ref{aDT}\right) $, which is the anti-doubling property of the mean
exit time. \newline
It is also shown in \cite{tER} that 
\begin{equation*}
E\left( x,R\right) \geq cR^{2}
\end{equation*}%
provided $\left( p_{0}\right) $ and $\left( VD\right) $ hold. Furthermore $%
E\left( x,R\right) $ for $R\in 
\mathbb{N}
$ is strictly monotone and consequently has inverse 
\begin{equation*}
e\left( x,n\right) =\min \left\{ r\in 
\mathbb{N}
:E\left( x,r\right) \geq n\right\} .
\end{equation*}%
It is worth to recall that\ the following statements are equivalent \newline
1. There are $C,c>0,\beta \geq \beta ^{\prime }>0$ such that for all $x\in
\Gamma ,R\geq S>0,$ $y\in B\left( x,R\right) $%
\begin{equation}
c\left( \frac{R}{S}\right) ^{\beta ^{\prime }}\leq \frac{E\left( x,R\right) 
}{E\left( y,S\right) }\leq C\left( \frac{R}{S}\right) ^{\beta },  \label{Eb}
\end{equation}%
2. There are $C,c>0,\beta \geq \beta ^{\prime }>0$ such that for all $x\in
\Gamma ,n\geq m>0,$ $y\in B\left( x,e\left( x,n\right) \right) $%
\begin{equation}
c\left( \frac{n}{m}\right) ^{1/\beta }\leq \frac{e\left( x,n\right) }{%
e\left( y,m\right) }\leq C\left( \frac{n}{m}\right) ^{1/\beta ^{\prime }}.
\label{eb}
\end{equation}
\end{remark}

\begin{definition}
The local sub-Gaussian kernel is the following$.$\newline
Let $k=k_{z}\left( n,R\right) \geq 0$ the maximal integer for which 
\begin{equation}
\frac{n}{k}\leq qE\left( z,\left\lfloor \frac{R}{k}\right\rfloor \right)
\label{defkernel}
\end{equation}%
and $k_{z}\left( n,R\right) =0$ if there is no such an integer. The
sub-Gaussian kernel is defined as 
\begin{equation*}
k\left( x,n,R\right) =\min_{z\in B\left( x,R\right) }k_{z}\left( n,R\right) .
\end{equation*}
\end{definition}

\begin{remark}
From the definition of $k_{z}\left( n,R\right) $ and $\left( TC\right) $ it
follows easily that%
\begin{equation*}
k_{z}\left( n,R\right) +1\geq c\left( \frac{E\left( z,R\right) }{n}\right) ^{%
\frac{1}{\beta -1}}
\end{equation*}%
and for \thinspace $k\left( x,n,R\right) $ with another use of $\left(
TC\right) $ one obtains that for $d\left( x,z\right) <R$%
\begin{equation}
k\left( x,n,R\right) +1\geq c\left( \frac{E\left( x,R\right) }{n}\right) ^{%
\frac{1}{\beta -1}}.  \label{k>}
\end{equation}
\end{remark}

The equivalence of the isoperimetric inequalities in Theorem \ref{tmain} is
based on the next observation.

\begin{proposition}
\label{pcycle}\label{callcc}Let $\delta >0,A\subset \Gamma $. The following
statements are equivalent. 
\begin{equation}
\overline{E}\left( A\right) \leq C\mu \left( A\right) ^{\delta },
\label{ie1}
\end{equation}%
\begin{equation}
\lambda ^{-1}\left( A\right) \leq C\mu \left( A\right) ^{\delta },
\label{ie2}
\end{equation}%
\begin{equation}
\rho \left( D,A\right) \mu \left( D\right) \leq C\mu \left( A\right)
^{\delta }\text{ for all }D\subset A.  \label{ie3}
\end{equation}
\end{proposition}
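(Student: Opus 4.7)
My plan is to close the cycle \eqref{ie1}$\Rightarrow$\eqref{ie2}$\Rightarrow$\eqref{ie3}$\Rightarrow$\eqref{ie1}, relying on three identities: $G^{A}\mathbf{1}(x)=E_{x}(A)$; the Rayleigh characterization \eqref{ldef}, which makes $\lambda^{-1}(A)$ the operator norm $\|G^{A}\|_{\ell^{2}(\mu)\to\ell^{2}(\mu)}$; and the variational formula \eqref{resdef} for $\rho(D,A^{c})$, whose reciprocal is realised as $\mathcal{E}(\varphi,\varphi)$ by the equilibrium potential $\varphi\in c_{0}(A)$ with $\varphi\equiv 1$ on $D$.

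For \eqref{ie1}$\Rightarrow$\eqref{ie2}, positivity of the Green kernel gives $\|G^{A}\|_{\ell^{\infty}\to\ell^{\infty}}=\sup_{x}G^{A}\mathbf{1}(x)=\overline{E}(A)$, and self-adjointness on $\ell^{2}(\mu)$ upgrades this to $\|G^{A}\|_{\ell^{1}\to\ell^{1}}=\overline{E}(A)$. Riesz--Thorin interpolation then yields $\lambda^{-1}(A)=\|G^{A}\|_{\ell^{2}\to\ell^{2}}\le\overline{E}(A)\le C\mu(A)^{\delta}$. For \eqref{ie2}$\Rightarrow$\eqref{ie3}, inserting the equilibrium potential $\varphi$ of $D\subset A$ into \eqref{ldef} gives $(\varphi,\varphi)\ge\mu(D)$ (since $\varphi\equiv 1$ on $D$) and
\[
\frac{1}{\rho(D,A^{c})}=\mathcal{E}(\varphi,\varphi)\ge\lambda(A)(\varphi,\varphi)\ge\lambda(A)\mu(D),
\]
so $\rho(D,A^{c})\mu(D)\le\lambda^{-1}(A)\le C\mu(A)^{\delta}$.

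The delicate step is \eqref{ie3}$\Rightarrow$\eqref{ie1}. Fix $x_{0}\in A$ realising $\overline{E}(A)=E_{x_{0}}(A)$ and consider $u(y)=g^{A}(x_{0},y)$: it is harmonic on $A\setminus\{x_{0}\}$, vanishes on $A^{c}$, attains its maximum $M:=g^{A}(x_{0},x_{0})$ at $x_{0}$, and satisfies the identities $\|u\|_{1}=\overline{E}(A)$ and $\mathcal{E}(u,u)=M$. For each $t\in(0,M)$ the level set $V_{t}=\{u\ge t\}\subset A$ contains $x_{0}$, and the truncation $(u\wedge t)/t$ is admissible in \eqref{resdef} for $\rho(V_{t},A^{c})$, yielding $1/\rho(V_{t},A^{c})\le\mathcal{E}(u\wedge t,u\wedge t)/t^{2}\le M/t^{2}$ by the Markov contraction. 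Combined with \eqref{ie3} at $D=V_{t}$ this gives $\mu(V_{t})\le CM\mu(A)^{\delta}/t^{2}$. Integrating the layer-cake identity $E_{x_{0}}(A)=\int_{0}^{M}\mu(V_{t})\,dt$ against this bound together with the crude $\mu(V_{t})\le\mu(A)$, split at $t_{\ast}=\sqrt{CM\mu(A)^{\delta-1}}$, produces
\[
\overline{E}(A)\le 2\sqrt{CM\mu(A)^{\delta+1}}.
\]
Applying \eqref{ie3} once more at $D=\{x_{0}\}$, where $\rho(\{x_{0}\},A^{c})=g^{A}(x_{0},x_{0})=M$ by the singleton-capacity identity, provides the complementary bound on $M$ needed to close the cycle.

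The main obstacle is precisely this third step, which must convert the $\ell^{2}$-type isocapacitary inequality \eqref{ie3} into an $\ell^{\infty}$-estimate on the mean exit time. The bridge is the Green function $g^{A}(x_{0},\cdot)$ at the extremal base point $x_{0}$, distinguished by the identity $\mathcal{E}(u,u)=\|u\|_{\infty}$; combining \eqref{ie3} simultaneously at all level sets $V_{t}$ and at the singleton $\{x_{0}\}$ is what pins down $\overline{E}(A)$. The clean form of the statement should be read as saying that if any one of \eqref{ie1}--\eqref{ie3} holds for some constants $C,\delta>0$, then so do the other two for (possibly different) constants, which is all that is needed in the main Theorem \ref{tmain}.
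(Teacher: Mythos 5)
The first two arcs of your cycle are correct and in fact supply direct proofs of the lemmas the paper merely cites: the Riesz--Thorin interpolation $\lambda^{-1}(A)=\|G^{A}\|_{2\to2}\leq\|G^{A}\|_{\infty\to\infty}=\overline{E}(A)$ is a clean self-contained proof of Lemma \ref{lebar}, and the Rayleigh-quotient estimate with the equilibrium potential recovers Lemma \ref{llrv} verbatim. So far you match the paper's route $(\ref{ie1})\Rightarrow(\ref{ie2})\Rightarrow(\ref{ie3})\Rightarrow(\ref{ie1})$ exactly, just with the first two citations unpacked.

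The third arc, $(\ref{ie3})\Rightarrow(\ref{ie1})$, has a genuine gap. Your layer-cake computation does give $\overline{E}(A)\leq 2\sqrt{CM\mu(A)^{\delta+1}}$, and the identity $M=g^{A}(x_{0},x_{0})=\rho(\{x_{0}\},A^{c})$ is correct, but combining them with $(\ref{ie3})$ at $D=\{x_{0}\}$ produces
\[
\overline{E}(A)\leq 2\sqrt{C\cdot\frac{C\mu(A)^{\delta}}{\mu(x_{0})}\cdot\mu(A)^{\delta+1}}=\frac{2C\,\mu(A)^{\delta+1/2}}{\sqrt{\mu(x_{0})}},
\]
which is not of the form $C'\mu(A)^{\delta}$: the exponent is $\delta+\tfrac12$, and the factor $1/\sqrt{\mu(x_{0})}$ is not uniformly bounded as $x_{0}$ ranges over an infinite graph. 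The proposition, and its role in Proposition \ref{p3eq} and Theorem \ref{tmain}, requires the conclusion with the \emph{same} exponent $\delta$, so the fallback in your last paragraph (``possibly different constants'') is not what is being claimed --- and your bound is not even of that weaker form because of the $\mu(x_{0})$ dependence. The underlying issue is that the Markov-contraction bound $\rho(V_{t},A^{c})\geq t^{2}/M$ is too lossy; since $u$ is harmonic on $A\setminus\{x_{0}\}$ with unit flux at $x_{0}$ one in fact has $\mathcal{E}(\min(u,t),\min(u,t))\leq t$, hence $\rho(V_{t},A^{c})\geq t$ and $\mu(V_{t})\leq C\mu(A)^{\delta}/t$, but even this sharper input closes the layer-cake integral $\int_{0}^{M}\mu(V_{t})\,dt$ only up to a factor $\log(M\mu(A)^{1-\delta})$. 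A single level-set sweep cannot recover the exponent $\delta$. The paper handles exactly this implication by invoking Lemma \ref{lriter}, cited as Proposition 3.2 of \cite{T6}; to make your third step rigorous you would need to reproduce that iterative argument rather than the one-shot layer-cake estimate.
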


The proofs are given via a series of lemmas.

\begin{lemma}
\bigskip \label{llrv}(c.f. Lemma 4.6 \cite{T5}) For all weighted graphs and
for all finite sets, $A\subset B\subset \Gamma $ the inequality 
\begin{equation}
\lambda (B)\rho (A,B^{c})\mu (A)\leq 1,  \label{lrm}
\end{equation}%
holds, in particular%
\begin{equation}
\lambda (x,2R)\rho (x,R,2R)V(x,R)\leq 1.  \label{lrvb}
\end{equation}
\end{lemma}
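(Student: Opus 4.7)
The plan is to exploit the variational characterizations of both $\lambda(B)$ and $\rho(A,B^c)$, using the equilibrium potential of the resistance problem as a test function for the eigenvalue.

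First I would let $h$ be the minimizer in the resistance variational problem (\ref{resdef}), so that $h|_A = 1$, $h|_{B^c} = 0$, and $\mathcal{E}(h,h) = 1/\rho(A,B^c)$. The key observation is that $h$ vanishes outside $B$, hence $h \in c_0(B)$ and therefore $h$ is admissible in the definition (\ref{ldef}) of $\lambda(B)$. Plugging $h$ into that Rayleigh quotient gives
\begin{equation*}
\lambda(B) \leq \frac{\mathcal{E}(h,h)}{(h,h)} = \frac{1}{\rho(A,B^c)\,(h,h)},
\end{equation*}
which rearranges to $\lambda(B)\,\rho(A,B^c)\,(h,h) \leq 1$.

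It remains to bound $(h,h)$ from below by $\mu(A)$. Since $h|_A \equiv 1$, we have
\begin{equation*}
(h,h) = \sum_{x\in\Gamma} h(x)^2 \mu(x) \geq \sum_{x\in A} h(x)^2 \mu(x) = \mu(A),
\end{equation*}
and combining this with the previous display yields $\lambda(B)\rho(A,B^c)\mu(A) \leq 1$, which is (\ref{lrm}). The displayed inequality (\ref{lrvb}) is the special case $A = B(x,R)$, $B = B(x,2R)$, noting that $\rho(B(x,R),\Gamma\setminus B(x,2R)) = \rho(x,R,2R)$ and $\mu(A) = V(x,R)$ by the definitions.

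I expect no real obstacle here: the only subtlety is that one should use the actual minimizer $h$ rather than a general competitor (so that the resistance appears with equality, not just as a bound), and that $h \in c_0(B)$ so that it qualifies as a test function for $\lambda(B)$. No sign information on $h$ is needed because $h^2 \geq 0$ suffices for the lower bound on $(h,h)$; one does not need to invoke the maximum principle or truncation.
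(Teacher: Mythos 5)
Your proof is correct, and it is the standard variational argument that one would expect to find behind a statement of this kind (the paper itself only cites Lemma 4.6 of \cite{T5} rather than giving a proof). The two key points — that the equilibrium potential $h$ for the pair $(A, B^c)$ lies in $c_0(B)$ and so is an admissible test function in (\ref{ldef}), and that $(h,h) \geq \mu(A)$ follows trivially from $h|_A \equiv 1$ with no need for a maximum principle — are both handled cleanly. One could even avoid invoking existence of the minimizer by taking a minimizing sequence, since the bound $(h_n,h_n)\geq\mu(A)$ holds for every admissible competitor; but for finite $B$ the minimizer exists and your use of it is fine. The specialization to (\ref{lrvb}) is correctly read off from the definitions.
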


\begin{lemma}
\label{lriter} (c.f. Proposition 3.2 \cite{T6}) If for a finite $A\subset
\Gamma $ \ there are $C,C^{\prime },\delta >0$ \ such that 
\begin{equation}
\mu \left( D\right) \rho \left( D,A\right) \leq C\mu \left( A\right)
^{\delta }\text{ for all }D\subset A
\end{equation}%
, then 
\begin{equation*}
\overline{E}\left( A\right) \leq C^{\prime }\mu \left( A\right) ^{\delta }.
\end{equation*}
\end{lemma}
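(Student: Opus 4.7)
The plan is to prove the exit-time bound by analysing the superlevel sets of the local Green function from a distinguished point. First I would fix $x_{0}\in A$ with $E_{x_{0}}(A)=\overline{E}(A)=:M$ and work with $g(y):=g^{A}(x_{0},y)$. Then $g\geq 0$, $g$ attains its maximum $g(x_{0})=\rho(\{x_{0}\},A^{c})$ at $x_{0}$, and $-\Delta g=\mu(x_{0})^{-1}\delta_{x_{0}}$ on $A$ with $g|_{A^{c}}=0$. The layer-cake formula then rewrites the quantity of interest as
\[
  M \;=\; \sum_{y}g(y)\mu(y) \;=\; \int_{0}^{g(x_{0})}\mu(L_{t})\,dt, \qquad L_{t}:=\{y\in A: g(y)\geq t\},
\]
reducing the task to a uniform upper bound on $\mu(L_{t})$.

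The core of the argument will be the capacity estimate $\mathrm{cap}(L_{t},A^{c})\leq 1/t$ for $0<t\leq g(x_{0})$. I would take the truncation $w_{t}:=\min(g,t)/t$, which is admissible for the capacitor $(L_{t},A^{c})$ (equal to $1$ on $L_{t}$ and vanishing on $A^{c}$), so that
\[
  \mathrm{cap}(L_{t},A^{c}) \;\leq\; \mathcal{E}(w_{t},w_{t}) \;=\; t^{-2}\mathcal{E}(\min(g,t),\min(g,t)),
\]
and then apply the edge-by-edge truncation inequality $\mathcal{E}(\min(g,t),\min(g,t))\leq \mathcal{E}(\min(g,t),g)$ together with the summation-by-parts identity $\mathcal{E}(\min(g,t),g)=(\min(g,t),-\Delta g)_{\mu}=\min(g(x_{0}),t)=t$. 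Feeding the resulting lower bound $\rho(L_{t},A^{c})\geq t$ into the hypothesis $\mu(L_{t})\rho(L_{t},A^{c})\leq C\mu(A)^{\delta}$ produces the crucial decay
\[
  \mu(L_{t}) \;\leq\; \frac{C\mu(A)^{\delta}}{t}.
\]

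To conclude I would combine this with the trivial bound $\mu(L_{t})\leq\mu(A)$ and split the layer-cake integral at the crossover $t^{\star}:=C\mu(A)^{\delta-1}$, obtaining
\[
  M \;\leq\; t^{\star}\mu(A) \;+\; \int_{t^{\star}}^{g(x_{0})}\frac{C\mu(A)^{\delta}}{t}\,dt \;=\; C\mu(A)^{\delta}\Bigl(1+\log_{+}\tfrac{g(x_{0})}{t^{\star}}\Bigr).
\]
The first summand gives exactly the bound asked for. The main obstacle will be absorbing the logarithmic tail: the hypothesis applied to the singleton $\{x_{0}\}$ yields only $g(x_{0})\leq C\mu(A)^{\delta}/\mu(x_{0})$, which is too crude. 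I expect to close the gap by iterating the capacity estimate on the level sets near the tip of $g$, where $L_{t}$ shrinks toward $\{x_{0}\}$; this should force $g(x_{0})$ to be comparable to $t^{\star}$ up to a universal multiplicative constant, so that $\log(g(x_{0})/t^{\star})=O(1)$ and is absorbed into the constant $C'$. This bootstrap at the tip of $g$ is the delicate step, and is precisely where the hypothesis for \emph{all} subsets $D\subset A$, rather than just singletons, is essential.
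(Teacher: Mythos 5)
Your setup is sound and matches the natural strategy: fixing $x_{0}$ achieving $\overline{E}(A)$, passing to the Green function $g=g^{A}(x_{0},\cdot)$, deriving $\mathrm{cap}(L_{t},A^{c})\leq 1/t$ via the truncation $\min(g,t)/t$, and feeding $\rho(L_{t},A^{c})\geq t$ into the hypothesis to get $\mu(L_{t})\leq C\mu(A)^{\delta}/t$ are all correct. You also correctly recognize that the layer-cake integral then produces a logarithmic factor $\log_{+}\bigl(g(x_{0})/t^{\star}\bigr)$ that must somehow disappear. The paper itself does not supply a proof of this lemma --- it only points to Proposition 3.2 of Telcs, \emph{Potential Analysis} 19 (2003) --- so the comparison has to be with what your argument actually delivers.

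The difficulty is that the mechanism you propose to kill the logarithm does not work. You predict that iterating the capacity estimate near the tip should force $g(x_{0})\simeq t^{\star}$ up to a universal constant, i.e. $\rho(\{x_{0}\},A^{c})\simeq C\mu(A)^{\delta-1}$. But the only control on $g(x_{0})$ that the hypothesis gives, via $D=\{x_{0}\}$, is $g(x_{0})\leq C\mu(A)^{\delta}/\mu(x_{0})$, so that $g(x_{0})/t^{\star}$ can be as large as $\mu(A)/\mu(x_{0})$. This ratio is genuinely unbounded: take $A$ to be a long path of length $m$ attached at one end to $x_{0}$ and at the other end to a dense ``blob'' (say a clique) of volume $\mu(A)\approx W^{2}$ that connects to $A^{c}$; then $g(x_{0})\approx m$ while $t^{\star}\approx m^{2}/W^{2}$, so $g(x_{0})/t^{\star}\approx W^{2}/m\to\infty$ as $W\to\infty$ for suitable $m$. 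In such examples the conclusion $\overline{E}(A)\leq C'\mu(A)^{\delta}$ still holds, but not because $g(x_{0})\simeq t^{\star}$; rather because the bound $\mu(L_{t})\leq C\mu(A)^{\delta}/t$ is very wasteful on the range $(t^{\star},g(x_{0}))$, where the level sets actually shrink faster than $1/t$.

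So the gap is real: both the split-integral bound and the ``iterate the capacity estimate at the tip'' idea, as written, top out at $\overline{E}(A)\lesssim\mu(A)^{\delta}\log(\mu(A)/\mu(x_{0}))$, and no amount of bootstrapping on the single inequality $\mu(L_{t})\,t\leq C\mu(A)^{\delta}$ can remove that logarithm, because a profile $\mu(L_{t})\sim\Phi/t$ over a long dyadic range is consistent with that inequality. To close the argument you need some further input that detects the faster decay of $\mu(L_{t})$ forced by the combination of the hypothesis with $\delta>0$ and the actual graph structure --- this is precisely the content of the cited Proposition 3.2 in Telcs (2003), and it is not reproduced or replaced by the bootstrap you sketch. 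As it stands, the final paragraph of your proposal is an unproven claim, and the specific comparability $g(x_{0})\simeq t^{\star}$ you rely on is false.
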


\begin{lemma}
\label{lebar} (c.f Lemma 3.6 \cite{TD}) For any finite set $A\subset \Gamma $%
\begin{equation}
\lambda ^{-1}\left( A\right) \leq \overline{E}\left( A\right) .
\label{llebar}
\end{equation}
\end{lemma}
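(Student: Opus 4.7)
The plan is to exploit the identity
\begin{equation*}
E_{x}\left(A\right)=\sum_{y\in A}g^{A}(x,y)\mu(y),
\end{equation*}
i.e.\ the function $x\mapsto E_{x}(A)$ is exactly $(-\Delta^{A})^{-1}\mathbf{1}_{A}$ on $A$. This links $\overline{E}(A)$ to the Green operator, whose operator norm on $L^{2}(A,\mu)$ equals $\lambda^{-1}(A)$.

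First I would set up the operator picture. On the finite-dimensional Hilbert space $L^{2}(A,\mu)$, the operator $-\Delta^{A}$ is self-adjoint and strictly positive (by the variational characterization \eqref{ldef} of $\lambda(A)>0$), so its inverse $G$, with nonnegative symmetric kernel $g^{A}$, is self-adjoint and positive with largest eigenvalue $\lambda^{-1}(A)$. Let $\phi\in c_{0}(A)$ be a corresponding eigenfunction, i.e.\ $G\phi=\lambda^{-1}(A)\phi$. One may take $\phi\geq0$: either replace $\phi$ by $|\phi|$ (using the Markov property of the Dirichlet form, which shows $|\phi|$ is also a minimizer in \eqref{ldef}), or invoke Perron--Frobenius on each connected component of $A$ (on which $G$ is irreducible with strictly positive kernel).

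Second, pick $x_{0}\in A$ with $\phi(x_{0})=\max_{A}\phi>0$, and evaluate the eigenvalue equation there:
\begin{equation*}
\lambda^{-1}(A)\,\phi(x_{0})=\sum_{y\in A}g^{A}(x_{0},y)\phi(y)\mu(y)\leq\phi(x_{0})\sum_{y\in A}g^{A}(x_{0},y)\mu(y)=\phi(x_{0})\,E_{x_{0}}(A)\leq\phi(x_{0})\,\overline{E}(A).
\end{equation*}
Dividing by $\phi(x_{0})$ yields the claim. (A completely parallel route avoids the eigenfunction altogether by Schur's test: the symmetric nonnegative kernel $g^{A}$ satisfies $\|G\|_{L^{2}(\mu)\to L^{2}(\mu)}\leq\sup_{x}\sum_{y}g^{A}(x,y)\mu(y)=\overline{E}(A)$, giving the same conclusion.)

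There is really no serious obstacle here; the only point that needs care is justifying that the extremal $\phi$ can be chosen nonnegative so that $\phi(x_{0})>0$, which is standard. Everything else is a one-line maximum-principle estimate once the identification of $E_{x}(A)$ as the Green-potential of $\mathbf{1}_{A}$ is in hand.
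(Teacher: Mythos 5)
Your proof is correct. The paper states Lemma \ref{lebar} by citing Lemma 3.6 of \cite{TD} and does not reproduce the argument, so there is no in-paper proof to compare against; but the route you take is the standard one and is surely the intent of the citation. The key identity $E_{x}(A)=\sum_{y\in A}g^{A}(x,y)\mu(y)$ holds because $\sum_{y\in A}P_{k}^{A}(x,y)=\mathbb{P}_{x}(T_{A}>k)$ and summing over $k$ gives $\mathbb{E}_{x}(T_{A})$; the operator $G=(-\Delta^{A})^{-1}$ is well defined on $c_{0}(A)$ for finite $A$ since the killed walk is absorbed in finite expected time, and its top eigenvalue is $\lambda^{-1}(A)$. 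Both of your closing arguments are sound: the $|\phi|$-replacement (or Perron--Frobenius on each connected component of $A$, where the Green kernel is strictly positive) legitimately produces a nonnegative maximal eigenfunction, and evaluating the eigenvalue equation at the maximum of $\phi$ gives the bound; alternatively the Schur test for the symmetric nonnegative kernel $g^{A}$ gives $\lambda^{-1}(A)=\|G\|_{L^{2}\to L^{2}}\leq\sup_{x\in A}\sum_{y\in A}g^{A}(x,y)\mu(y)=\overline{E}(A)$ directly, without needing an eigenfunction at all. No gaps.
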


\begin{proof}[Proof of Proposition \protect\ref{pcycle}]
The implication $\left( \ref{ie1}\right) \Longrightarrow \left( \ref{ie2}%
\right) $ \ follows from Lemma \ref{lebar}, $\left( \ref{ie2}\right)
\Longrightarrow \left( \ref{ie3}\right) $ from Lemma \ref{llrv} and finally $%
\left( \ref{ie3}\right) \Longrightarrow \left( \ref{ie1}\right) $ by Lemma %
\ref{lriter}.
\end{proof}

We finish this section showing the connection between the isoperimetric
inequalities in Theorem \ref{tmain} and Corollary \ref{cmain}. \ 

\begin{proposition}
$\label{p3eq}$The statements $\left( \ref{sFKE}\right) ,\left( \ref{sFKll}%
\right) $ \ and $\left( \rho \right) $ are equivalent as well as $\left( \ref%
{FKE}\right) ,\left( \ref{fkll}\right) $ \ and $\left( \ref{FKrr}\right) .$
\ 
\end{proposition}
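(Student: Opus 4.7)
The plan is to reduce both equivalences to the chain of implications already established in Proposition \ref{pcycle}, together with Theorem \ref{tallcc} in the second case. The key point is that Lemmas \ref{lebar}, \ref{llrv}, \ref{lriter} bound the three quantities $\overline{E}(A)$, $\lambda^{-1}(A)$, and $\sup_{D\subset A}\rho(D,A)\mu(D)$ against one another pointwise, independently of whatever upper bound one wishes to impose on the right-hand side. So the cyclic argument $(1)\Rightarrow(2)\Rightarrow(3)\Rightarrow(1)$ transfers with any prescribed right-hand side.

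For the first triple, I would simply repeat the proof of Proposition \ref{pcycle} with the specific right-hand side $K(x,R,A,B)=CE(x,R)(\mu(A)/\mu(B))^{\delta}$, which depends on $A$ but not on $D$. Explicitly, $(\ref{sFKE})\Rightarrow(\ref{sFKll})$ follows from $\lambda^{-1}(A)\leq\overline{E}(A)$ (Lemma \ref{lebar}); $(\ref{sFKll})\Rightarrow(\ref{sFKrr})$ follows from Lemma \ref{llrv} applied with the lemma's $A,B$ taken to be my $D,A$, giving $\rho(D,A^{c})\mu(D)\leq\lambda^{-1}(A)$ for every $D\subset A$; and $(\ref{sFKrr})\Rightarrow(\ref{sFKE})$ is Lemma \ref{lriter} applied with the same $K$. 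No new content is needed.

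For the second triple, I would first use Theorem \ref{tallcc} together with $(TC)$ (which gives $E(x,2R)\simeq E(x,R)$) and $(\ref{V3})$ (which gives $v(x,R,2R)\simeq V(x,2R)=\mu(B)$) to conclude
\begin{equation*}
\lambda^{-1}(B)\simeq\overline{E}(B)\simeq E(x,R)\simeq\rho(x,R,2R)\,\mu(B).
\end{equation*}
Then $(\ref{FKE})$ and $(\ref{fkll})$ are immediately identified with $(\ref{sFKE})$ and $(\ref{sFKll})$ (the replacement of $B(x,3R)$ by $B(x,2R)$ is harmless under $(VD)$). The only step that is not pure bookkeeping is the algebraic identity
\begin{equation*}
\mu(D)\left(\frac{\mu(A)}{\mu(D)}\right)^{\delta}\left(\frac{\mu(D)}{\mu(B)}\right)^{\delta-1}=\mu(B)\left(\frac{\mu(A)}{\mu(B)}\right)^{\delta},
\end{equation*}
which, after multiplying $(\ref{FKrr})$ by $\mu(D)$, turns it into $\rho(D,A)\mu(D)\leq C\rho(x,R,2R)\mu(B)(\mu(A)/\mu(B))^{\delta}\simeq CE(x,R)(\mu(A)/\mu(B))^{\delta}$, i.e. into $(\rho)$. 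The second equivalence thus reduces to the first. The main obstacle, such as it is, lies in getting this identity to line up: the somewhat unusual splitting of the exponents in $(\ref{FKrr})$ is precisely what is needed to make the $\mu(D)$-dependence cancel and leave the clean ratio $\mu(A)/\mu(B)$.
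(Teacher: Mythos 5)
Your proposal is correct and follows essentially the same route as the paper: both apply Proposition \ref{pcycle} with the right-hand side $CE(x,R)(\mu(A)/\mu(B))^\delta$ in place of $C\mu(A)^\delta$, and both reduce $(\ref{FKrr})$ to the $(\rho)$-form via exactly the algebraic identity you display and the comparisons $E(x,R)\simeq\rho(x,R,2R)\,v(x,R,2R)\simeq\rho(x,R,2R)\,\mu(B)$ from Theorem \ref{tallcc} and $(\ref{V3})$. Your remark that the bound must be independent of $D$ for Lemma \ref{lriter} to apply, and your explicit handling of the $B(x,3R)$ versus $B(x,2R)$ discrepancy via $(VD)$, are slightly more careful than the paper's terse proof but add no new ideas.
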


\begin{proof}
The first statement follows from Proposition \ref{pcycle} setting $%
C=C^{\prime }\frac{E\left( x,R\right) }{V\left( x,R\right) ^{\delta }}$. The
second statement uses Proposition \ref{callcc} and the observation that $%
\left( \ref{FKrr}\right) $ can be written as 
\begin{equation*}
\rho \left( D,A\right) \mu \left( D\right) \leq C\rho \left( x,R,2R\right)
V\left( x,R\right) \frac{\mu \left( A\right) ^{\delta }}{V\left( x,R\right)
^{\delta }}.
\end{equation*}
\end{proof}

\begin{proposition}
\label{pskew}Each statement $\left( \ref{sFKE}\right) ,\left( \ref{sFKll}%
\right) $ \ and $\left( \rho \right) $ implies $\left( VD\right) $ and $%
\left( TC\right) .$
\end{proposition}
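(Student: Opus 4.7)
The plan is to invoke the equivalence of $(\ref{sFKE})$, $(\ref{sFKll})$ and $(\ref{sFKrr})$ supplied by Proposition \ref{p3eq}, so that it suffices to show that $(\ref{sFKE})$ alone implies $\left( VD\right) $ and $\left( TC\right) $. Both conclusions will be obtained by instantiating $(\ref{sFKE})$ at two carefully chosen sets $A$ inside $B=B(x,3R)$.

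For $\left( VD\right) $, the idea is to take $A=B(x,R)\subset B(x,3R)$, which is the smallest natural ``degenerate'' choice. Then $(\ref{sFKE})$ reads
\[
\overline{E}(x,R)\;\leq\; C\, E(x,R)\left(\frac{V(x,R)}{V(x,3R)}\right)^{\delta}.
\]
Because $x\in B(x,R)$, the definition of $\overline{E}$ gives $\overline{E}(x,R)\geq E_{x}(B(x,R))=E(x,R)$, and since $X_{0}=x\in B(x,R)$ forces $T_{B(x,R)}\geq 1$ we have $E(x,R)>0$. Cancelling $E(x,R)$ from both sides leaves
\[
V(x,3R)\;\leq\; C^{1/\delta}\, V(x,R),
\]
which, combined with $V(x,2R)\leq V(x,3R)$, is the volume doubling property after adjusting the constant.

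For $\left( TC\right) $, I would take $A=B=B(x,3R)$, so that the ratio $\mu(A)/\mu(B)$ collapses to $1$ and $(\ref{sFKE})$ yields the clean bound $\overline{E}(x,3R)\leq C\, E(x,R)$. Given $y\in B(x,R)$, the triangle inequality shows $B(y,2R)\subset B(x,3R)$, and exit times are pathwise monotone in the underlying set ($T_{A'}\leq T_{A}$ whenever $A'\subset A$), so
\[
E(y,2R)\;=\; E_{y}(B(y,2R))\;\leq\; E_{y}(B(x,3R))\;\leq\; \overline{E}(x,3R)\;\leq\; C\, E(x,R),
\]
which is exactly $\left( TC\right) $.

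I do not anticipate a genuine obstacle: once Proposition \ref{p3eq} pivots everything to $(\ref{sFKE})$, each of the two deductions is essentially a one-line choice of $A$ followed by a monotonicity observation (the definition of $\overline{E}$ in the first case, and nested-set monotonicity of exit times in the second). Importantly, neither step uses $\left( VD\right) $ or $\left( TC\right) $, so there is no circularity. The only mildly delicate issue — namely that the factor $E(x,R)$ may be safely divided out in the $\left( VD\right) $ argument — is handled by the trivial lower bound $E(x,R)\geq 1$.
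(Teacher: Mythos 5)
Your proof is correct and follows essentially the same strategy as the paper's (terse) argument: reduce to $(\ref{sFKE})$ via Proposition \ref{p3eq}, then obtain $(VD)$ by taking $A=B(x,R)$ and cancelling $\overline{E}(B(x,R))\geq E(x,R)$, and obtain $(TC)$ by a second choice of $A$. The only cosmetic difference is that for $(TC)$ the paper takes $A=B(y,2R)\subset B(x,3R)$ directly (so $\mu(A)/\mu(B)\leq 1$ and $\overline{E}(B(y,2R))\geq E(y,2R)$ give the conclusion), whereas you take $A=B(x,3R)$ and then pass to $E(y,2R)$ via pathwise monotonicity of exit times under the inclusion $B(y,2R)\subset B(x,3R)$; these two routes are interchangeable.
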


\begin{proof}
First let us observe that if one of them implies $\left( VD\right) $ then
all of them do, since they are equivalent by Proposition \ref{p3eq}. \ So we
can choose $\left( \ref{sFKE}\right) .$ Let $A=B\left( x,R\right) $ then $%
A=B\left( y,2R\right) $ we have immediately $\left( VD\right) $ and $\left(
TC\right) .$
\end{proof}

Proposition \ref{pskew} means that the volume doubling property, $\left(
VD\right) $ and the time comparison principle, $\left( TC\right) $ can be
set as precondition in Theorem \ref{tmain} as it is done in Corollary \ref%
{cmain}.

\begin{proposition}
\label{psumm}Theorem \ref{tmain} and Corollary \ref{cmain} mutually imply
each other.
\end{proposition}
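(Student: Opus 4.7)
The plan is to observe that, once $(VD)$ and $(TC)$ are in force, the unnormalized isoperimetric inequalities of Theorem \ref{tmain} and the normalized ones of Corollary \ref{cmain} say exactly the same thing up to absorbing a fixed quantity into the universal constant, while the heat-kernel clauses $(DUE)$ and $(UE)$ are written literally identically in the two statements. So the real content of the Proposition is just a translation of the right-hand sides of the isoperimetric inequalities.

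For the direction Theorem \ref{tmain} $\Rightarrow$ Corollary \ref{cmain}, I would use that the Corollary already assumes $(p_0)+(VD)+(TC)$, so Theorem \ref{tallcc} supplies the chain of comparabilities
\[
\overline{E}(B)\asymp E(x,R)\asymp \lambda^{-1}(B)\asymp \rho(x,R,2R)\,v(x,R,2R),
\]
and $(VD)$ yields $\mu(B(x,R))\asymp \mu(B(x,2R))\asymp \mu(B(x,3R))\asymp V(x,R)$. Substituting these into $(\ref{FKE})$, $(\ref{fkll})$ and $(\ref{FKrr})$ turns each of them into one of $(\ref{sFKE})$, $(\ref{sFKll})$, $(\ref{sFKrr})$ with a possibly new universal constant, so every equivalence of Theorem \ref{tmain} between an isoperimetric inequality and a heat-kernel bound is promoted to the corresponding equivalence in the Corollary.

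For the converse direction Corollary \ref{cmain} $\Rightarrow$ Theorem \ref{tmain}, I would invoke Proposition \ref{pskew} to note that any of the isoperimetric inequalities in Theorem \ref{tmain} already forces $(VD)$ and $(TC)$, while the heat-kernel clauses $(DUE)$ and $(UE)$ of Theorem \ref{tmain} are explicitly bundled with $(VD)$ and $(TC)$ in the statement. Thus the preconditions of Corollary \ref{cmain} are available in every case, and the same substitutions via Theorem \ref{tallcc} together with $(VD)$ translate the normalized right-hand sides back to the unnormalized ones of Theorem \ref{tmain}.

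The one piece of non-trivial bookkeeping is the resistance inequality, because $(\ref{FKrr})$ carries the extra factor $\bigl(\mu(D)/\mu(B)\bigr)^{\delta-1}$. This factor appears naturally when one takes
\[
\rho(D,A)\mu(D)\le C\,E(x,R)\left(\frac{\mu(A)}{\mu(B)}\right)^{\delta},
\]
divides through by $\rho(x,R,2R)\mu(D)$, and uses $\rho(x,R,2R)\,v(x,R,2R)\asymp E(x,R)$ together with $v(x,R,2R)\asymp \mu(B)$; what comes out is precisely $(\ref{FKrr})$. Once this single algebraic reduction is written out, the rest of the argument is pure substitution, and I expect it to be the main obstacle only in the sense of care with the three ball radii $R,2R,3R$.
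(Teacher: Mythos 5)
Your proposal is correct and follows essentially the same route as the paper: both directions rest on Proposition \ref{pskew} to make $(VD)$ and $(TC)$ available, Theorem \ref{tallcc} to trade $E(x,R)$ for $\overline{E}(B)$, $\lambda^{-1}(B)$ or $\rho(x,R,2R)\,v(x,R,2R)$, and $(VD)$ (plus $(\ref{V3})$) to identify $v(x,R,2R)$ with $\mu(B)$ and to pass between the radii $R,2R,3R$. Your explicit algebraic reduction of the resistance inequality is precisely the rearrangement the paper records in the proof of Proposition \ref{p3eq}, so the two arguments coincide in substance.
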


\begin{proof}
According to Proposition $\ref{pskew}$ we can set $\left( VD\right) $ and $%
\left( TC\right) $ as preconditions then using Theorem \ref{tallcc} \ the
r.h.s. of each inequality $E\left( x,R\right) $ can be replaced with the
needed term receiving that $\left( \ref{sFKE}\right) \Longrightarrow \left( %
\ref{FKE}\right) ,$ $\left( \ref{sFKll}\right) \Longrightarrow \left( \ref%
{fkll}\right) $ \ and $\left( \rho \right) \Longrightarrow \left( \ref{FKrr}%
\right) .$ The opposite implications can be seen choosing $R^{\prime }=\frac{%
3}{2}R$ and applying $\left( VD\right) ,\left( TC\right) $ and $\left( \ref%
{callcc}\right) .$ This clearly gives the statement. If any of the
isoperimetric inequalities is equivalent to the diagonal upper estimate then
all of them are.
\end{proof}

\section{The upper estimates}

\label{sfk}In this section we shall show the following theorem, which
implies Theorem \ref{tmain} according to Proposition \ref{psumm} and Theorem %
\ref{tallcc}.

\begin{theorem}
\label{tFK}If $\left( \Gamma ,\mu \right) $ satisfies $\left( p_{0}\right)
,\left( VC\right) $ and $\left( TC\right) $ then the following statements
are equivalent 
\begin{equation}
\lambda ^{-1}\left( A\right) \leq CE\left( x,R\right) \left( \frac{\mu
\left( A\right) }{\mu \left( B\right) }\right) ^{\delta }\text{ for all }%
A\subset B\left( x,2R\right) ,  \label{fkmain}
\end{equation}%
\begin{equation}
p_{n}(x,x)\leq \frac{C}{V\left( x,e\left( x,n\right) \right) }.
\label{duemain}
\end{equation}%
\begin{equation}
p_{n}\left( x,y\right) \leq \frac{C}{V\left( x,e\left( x,n\right) \right) }%
\exp \left( -c\left( \frac{E\left( x,d\left( x,y\right) \right) }{n}\right)
^{\frac{1}{\beta -1}}\right) \text{.}  \label{UEmain}
\end{equation}
\end{theorem}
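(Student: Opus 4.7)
The plan is to establish the cycle $(\ref{fkmain})\Rightarrow(\ref{duemain})\Rightarrow(\ref{UEmain})\Rightarrow(\ref{fkmain})$, with $(\ref{UEmain})\Rightarrow(\ref{duemain})$ immediate on setting $y=x$; so the genuine content lies in the three other arrows. Throughout I use Theorem \ref{tallcc} to interchange $E(x,R)$, $\overline{E}(x,R)$, $\lambda^{-1}(x,2R)$ and $\rho(x,R,2R)v(x,R,2R)$, the regularity properties of volume and mean exit time from Proposition \ref{plocvol} and the remark after Theorem \ref{taDT}, and Proposition \ref{pcycle} to pass between the three equivalent local isoperimetric bounds.

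For $(\ref{fkmain})\Rightarrow(\ref{duemain})$ I would localize Grigor'yan's Faber-Krahn-to-Nash technique. Fix $x\in\Gamma$ and $n\in\mathbb{N}$, set $R:=e(x,n)$ so that $E(x,R)\asymp n$, and let $B:=B(x,3R)$. On this ball $(\ref{fkmain})$ reads $\lambda^{-1}(A)\leq c\,n\,(\mu(A)/\mu(B))^{\delta}$ for every $A\subset B$. Applying this bound to the super-level sets $A_t=\{u_m>t\}$ of $u_m=p^B_m(x,\cdot)$ via the variational form $(\ref{ldef})$ of the eigenvalue produces a differential inequality
$$-\partial_m\|u_m\|_2^2 \;\gtrsim\; \frac{1}{n}\left(\frac{\mu(B)}{\mu(A_t)}\right)^{\delta}\|(u_m-t)_+\|_2^2,$$
and optimizing in $t$ as in \cite{G1} yields $\|u_{n}\|_2^2=p^B_{2n}(x,x)\leq C/\mu(B)\asymp C/V(x,e(x,n))$. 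The discarded contribution $p_n(x,x)-p^B_n(x,x)$ is controlled by the exit probability $\mathbb{P}_x(T_B\leq n)\leq n/\overline{E}(B)$, which is strictly less than $1$ by the choice of $R$ and $(TC)$; a short doubling-of-$n$ iteration recovers $(\ref{duemain})$ for the unrestricted kernel.

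The step $(\ref{duemain})\Rightarrow(\ref{UEmain})$ I expect to be the principal obstacle. I would follow Davies' exponential-perturbation scheme in the position-dependent form suggested by the abstract. The core ingredient is a \emph{generalized Davies-Gaffney inequality}: for disjoint $A,A'$ with $d(A,A')\geq r$ and $f\in\ell^2(A),\,g\in\ell^2(A')$,
$$|(P_n f,g)|\leq\|f\|_2\|g\|_2\exp\!\left(-c\,k_x(n,r)\right),$$
where $x$ lies near $A\cup A'$ and $k_x$ is the sub-Gaussian kernel of $(\ref{defkernel})$. This would be obtained by running the tilted semigroup $e^{\xi\rho}P_n e^{-\xi\rho}$ with $\rho$ a pseudo-metric tailored so that the excess energy $\mathcal{E}(e^{\xi\rho}f,e^{-\xi\rho}f)-\mathcal{E}(f,f)$ is controlled by the local inequalities $n/k\leq qE(z,\lfloor r/k\rfloor)$ built into the definition of $k_x(n,r)$. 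Combining the resulting $L^2$ off-diagonal decay with $(\ref{duemain})$ via a Chapman-Kolmogorov splitting of $p_{2n}(x,y)$ converts the $L^2$ bound into the pointwise estimate $(\ref{UEmain})$; the exponent $1/(\beta-1)$ arises exactly from $(\ref{k>})$. The chief technical difficulty is choosing $\rho$ so that the commutator estimate closes under only $(p_0)+(VD)+(TC)$, without a global $R^\beta$-type scaling hypothesis on $E(x,\cdot)$.

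Finally, for $(\ref{duemain})\Rightarrow(\ref{fkmain})$ I would extract a local Nash inequality. Under $(VD)+(TC)$, $(\ref{duemain})$ at the scale $n\asymp E(x,R)$ upgrades by $(VD)$-comparison of heat kernels to the uniform ultracontractive bound $\|P_m f\|_{\infty}\leq(C/\mu(B))\|f\|_1$ for $f\in c_0(B(x,3R))$ and $m\asymp n$, and the Coulhon duality between ultracontractivity and Nash inequalities then yields
$$\|f\|_2^{2(1+\delta)}\leq C\,E(x,R)\,\mu(B)^{-\delta}\|f\|_1^{2\delta}\,\mathcal{E}(f,f),\qquad f\in c_0(B(x,3R)).$$
Testing this against a first Dirichlet eigenfunction of $-\Delta^A$ for $A\subset B(x,3R)$ and using the Cauchy-Schwarz bound $\|f\|_1\leq\sqrt{\mu(A)}\,\|f\|_2$ recovers $(\ref{fkmain})$ with the same $\delta$ and a constant depending only on the given data.
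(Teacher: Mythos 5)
Your three-arrow skeleton matches the paper's, and the last arrow $(\ref{duemain})\Rightarrow(\ref{fkmain})$ via ultracontractivity and Nash-inequality duality is exactly what the paper does (it simply cites \cite{CG}). The difficulties are in the two non-trivial forward arrows, and in both cases the missing ingredient is the same: the sub-Gaussian exit-time estimate
\begin{equation*}
\mathbb{P}_{x}\!\left(T_{B(x,R)}<n\right)\leq C\exp\!\left[-c\,k\!\left(x,n,R\right)\right],
\end{equation*}
which the paper derives from $(\overline{E})$ (Theorem \ref{tebarimp}) and then uses twice.

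In $(\ref{fkmain})\Rightarrow(\ref{duemain})$, your bound ``$\mathbb{P}_x(T_B\le n)\leq n/\overline{E}(B)$'' is not a valid inequality — Markov's inequality controls $\mathbb{P}_x(T_B\geq n)$, not the probability of an \emph{early} exit. The correct elementary bound one can extract is $\mathbb{P}_x(T_B<n)\leq 1-(E_x(B)-n)/\overline{E}(B)$, which is only a constant strictly less than $1$, not a small $\varepsilon$. This is insufficient: the paper's iteration $(\ref{pi})$--$(\ref{p<3})$ accumulates a volume factor $(2/\sigma)^{\alpha i}$ from $(VD)$ at each scale, and the geometric series converges only because the exit probability at each step is forced below $\varepsilon<\min\{(\sigma/2)^{\alpha},1/C_1,A^{-\alpha}\}$ by choosing $L$ large in the sub-Gaussian estimate, where $k\gtrsim(L/C)^{1/(\beta-1)}$. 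A ``short doubling of $n$'' cannot supply the smallness. Relatedly, you skip the detour through $\Gamma^{*}=\,$(graph of $P^2$), which the paper flags as essential: the Nash/Faber--Krahn iteration in Lemma \ref{lprep} needs $q(x,x)\geq c>0$ for the key truncation step, and that positivity fails in general on $\Gamma$ itself.

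For $(\ref{duemain})\Rightarrow(\ref{UEmain})$, you honestly identify the obstruction in the Davies tilting scheme — finding a pseudo-metric $\rho$ whose commutator estimate closes with only $(TC)$ and no uniform $R^{\beta}$-scaling — but then leave it unresolved. The paper sidesteps it entirely: Theorem \ref{tGDI} proves a Davies--Gaffney inequality by Cauchy--Schwarz plus the probabilistic exit-time bound $\mathbb{P}_z(T_{z,r}<n)\leq\exp[-ck(z,n,r)]$, so no perturbed Dirichlet form, no commutator, and the exponent $1/(\beta-1)$ comes directly from $(\ref{k>})$. Moreover, converting the resulting $L^2$ off-diagonal decay to a pointwise bound by Chapman--Kolmogorov alone would not respect the space-dependent scaling $e(x,n)$; the paper instead passes through the parabolic mean value inequality $(DUE)\Rightarrow(PMV)$ (Theorem \ref{tDUE->PMV}), applies $(PMV)$ twice in Proposition \ref{tLUEvv} to localize in both arguments, then uses Lemma \ref{lvv} to swap $V(y,e(y,n))$ for $V(x,e(x,n))$ at the cost of a small exponential factor. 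Both of these are genuine pieces of machinery your sketch omits, and without them the argument does not close.
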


\subsection{Estimate of the Dirichlet heat kernel}

\begin{lemma}
\label{lNash}Let $(\Gamma ,\mu )$ be a weighted graph. \ Assume that for $%
a,C>0$ fixed constants and for any non-empty finite set $A\subset \Gamma $ 
\begin{equation}
\lambda (A)^{-1}\leq aC\mu \left( A\right) ^{\delta }.
\end{equation}%
The for any $f(x)$ non-negative function on $\Gamma $ with finite support 
\begin{equation*}
a\left\Vert f\right\Vert _{2}^{2}\left( \frac{\left\Vert f\right\Vert _{2}}{%
\left\Vert f\right\Vert _{1}}\right) ^{2\delta }\leq C\mathcal{E}\left(
f,f\right) .
\end{equation*}
\end{lemma}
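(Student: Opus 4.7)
The plan is to deduce the Nash-type inequality from the Faber--Krahn hypothesis by a standard truncation/level-set argument, optimising over the truncation level. Since the inequality $\lambda(A)^{-1}\le aC\mu(A)^\delta$ is assumed for every finite $A$, the natural choice is to apply it to the super-level sets $A_s=\{x:f(x)>s\}$ of $f$, estimating $\mu(A_s)$ by Markov's inequality $\mu(A_s)\le \|f\|_1/s$, which turns the volume bound on the right of the Faber--Krahn inequality into a bound in terms of $\|f\|_1$ and $s$.

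Concretely, I would first set $f_s=(f-s)_+$, which is supported in $A_s$. Using the variational definition of $\lambda$ in \eqref{ldef} applied to $f_s\in c_0(A_s)$ together with Faber--Krahn gives
\begin{equation*}
\mathcal{E}(f_s,f_s)\;\ge\;\lambda(A_s)\,\|f_s\|_2^2\;\ge\;\frac{1}{aC\,\mu(A_s)^\delta}\,\|f_s\|_2^2\;\ge\;\frac{s^\delta}{aC\,\|f\|_1^\delta}\,\|f_s\|_2^2.
\end{equation*}
Next, I would invoke two elementary pointwise facts. First, the Markov contraction $|(u-s)_+-(v-s)_+|\le|u-v|$ applied edge-by-edge to the definition of $\mathcal{E}$ yields $\mathcal{E}(f_s,f_s)\le \mathcal{E}(f,f)$. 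Second, on $\{f>s\}$ one has $(f-s)^2=f^2-2sf+s^2\ge f^2-2sf$, while on $\{f\le s\}$ the identity $f^2-2sf\le -sf\le 0$ (using $f\ge 0$) shows the same inequality trivially; summing against $\mu$ gives $\|f_s\|_2^2\ge \|f\|_2^2-2s\|f\|_1$.

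Combining these ingredients,
\begin{equation*}
\mathcal{E}(f,f)\;\ge\;\frac{s^\delta}{aC\,\|f\|_1^\delta}\bigl(\|f\|_2^2-2s\,\|f\|_1\bigr).
\end{equation*}
Optimising in $s$ by choosing $s=\|f\|_2^2/(4\|f\|_1)$ (so that the parenthesis equals $\tfrac{1}{2}\|f\|_2^2$) then produces
\begin{equation*}
\mathcal{E}(f,f)\;\ge\;\frac{1}{2\cdot 4^\delta}\cdot\frac{1}{aC}\,\|f\|_2^2\left(\frac{\|f\|_2}{\|f\|_1}\right)^{2\delta},
\end{equation*}
which is the claimed Nash inequality up to the universal factor $(2\cdot4^\delta)^{-1}$ absorbed into $C$.

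The only non-routine step is the monotonicity $\mathcal{E}(f_s,f_s)\le \mathcal{E}(f,f)$, but it is immediate from the edge-sum form of the Dirichlet form together with the Markov contraction $t\mapsto (t-s)_+$; the rest is bookkeeping and the one-parameter optimisation. I do not expect any genuine difficulty — the argument is the discrete analogue of the standard derivation of a Nash inequality from Faber--Krahn in the continuous setting, and the hypothesis $f\ge 0$ is what makes the pointwise inequality $(f-s)_+^2\ge f^2-2sf$ hold without an absolute value.
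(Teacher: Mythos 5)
Your proof is correct and follows the standard level-set truncation argument, which is precisely what the paper's cited sources (Lemma 5.2 of Grigor'yan--Telcs and Lemma 2.2 of Grigor'yan's fractal preprint, to which the paper defers) do: apply Faber--Krahn to the super-level set $A_s=\{f>s\}$ with the test function $(f-s)_+$, use Markov's inequality to bound $\mu(A_s)$, use the $1$-Lipschitz contraction $t\mapsto(t-s)_+$ for $\mathcal{E}(f_s,f_s)\le\mathcal{E}(f,f)$, use $f\ge0$ for $\|f_s\|_2^2\ge\|f\|_2^2-2s\|f\|_1$, and optimise in $s$. One small caveat: your derivation produces $\mathcal{E}(f,f)\ge(2\cdot4^{\delta}aC)^{-1}\|f\|_2^2\bigl(\|f\|_2/\|f\|_1\bigr)^{2\delta}$, with $a$ in the denominator, which is the correct dependence given the hypothesis $\lambda(A)^{-1}\le aC\mu(A)^\delta$ and is what is used downstream in Lemma~\ref{lprep}; the lemma as printed places $a$ on the left-hand side rather than $a^{-1}$, which appears to be a typo in the paper, not a gap in your argument.
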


\begin{proof}
The proof is a simple modification of \cite[Lemma 5.2]{GT1} (see also \cite[%
Lemma 2.2]{G1}).
\end{proof}

Now we have to make a careful detour as it was made in \cite{CG} or \cite%
{GT1}. The strategy is the following. \ We consider the weighted graph $%
\Gamma ^{\ast }$ with the same vertex set as $\Gamma $ with new edges and
weights induced by the two-step transition operator $Q=P^{2},$ 
\begin{equation*}
\mu _{x,y}^{\ast }=\mu \left( x\right) P_{2}\left( x,y\right) .
\end{equation*}%
If \ $\Gamma ^{\ast }$ is decomposed into two disconnected components due to
the periodicity of $P$ the applied argument will work irrespective which
component is considered. We show that $\left( p_{0}\right) ,\left( VD\right)
,\left( TC\right) $ \ and $\left( FK\right) $ hold on $\Gamma ^{\ast }$ if
they hold on $\Gamma .$ We deduce the Dirichlet heat kernel estimate for $Q$
\ on $\Gamma ^{\ast }$, then we show that it implies the same on $\Gamma .$
\ We have to do this detour to ensure 
\begin{equation*}
\frac{1}{\mu ^{\prime }\left( x\right) }Q\left( x,x\right) =q\left(
x,x\right) \geq c>0
\end{equation*}%
holds for all $x\in \Gamma ^{\ast }$ which will be needed in the key step to
show the diagonal upper estimate in the proof of Lemma \ref{lprep}.

\begin{lemma}
\label{lfkcsill}If $\left( p_{0}\right) ,\left( VD\right) ,\left( TC\right)
,\left( FK\right) $ and \ holds on $\Gamma $ , then the same is true on $%
\Gamma ^{\ast }$.
\end{lemma}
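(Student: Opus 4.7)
I would verify each of the four properties on $\Gamma^*$ in turn, starting with the identity $\mu^*(x) = \sum_y \mu(x) P_2(x,y) = \mu(x)$, which says the vertex measure is unchanged on passing from $\Gamma$ to $\Gamma^*$. Property $(p_0)^*$ is then immediate: whenever $x \sim^* y$ there is a $z \in \Gamma$ with $P(x,z)P(z,y) > 0$, so two applications of $(p_0)$ give $\mu^*_{x,y}/\mu^*(x) = P_2(x,y) \geq p_0^2$. In addition every vertex carries a $\Gamma^*$-self-loop since $\mu^*_{x,x} \geq \mu_{x,z}^2/\mu(z) > 0$ for any $\Gamma$-neighbor $z$ of $x$, so each $\Gamma^*$-component is aperiodic; at worst $\Gamma^*$ splits into two components (if $\Gamma$ is bipartite), and one works inside a single component throughout.

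For $(VD)^*$ I would compare the metric balls: a $\Gamma^*$-edge is a two-step $\Gamma$-walk, whence $B^*(x,R) \subseteq B(x, 2R)$, while conversely any $\Gamma$-path can be regrouped (within a $\Gamma^*$-component, using self-loops to fix parity) into a $\Gamma^*$-path of comparable length, giving $B(x,R) \cap C_x \subseteq B^*(x, R)$. Combined with $(VD)$ on $\Gamma$ this yields $V^*(x,R) \asymp V(x,R)$ and hence $(VD)^*$.

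The remaining conditions $(TC)^*$ and $(FK)^*$ both reduce to exit-time and resistance comparisons. The coupling $Y_k = X_{2k}$ gives $T_A \leq 2 T^*_A$ and hence $E_x(A) \leq 2 E^*_x(A)$, while a direct computation from
\[
\mu^*_{x,y} = \sum_z \frac{\mu_{x,z}\mu_{z,y}}{\mu(z)} \quad \text{and} \quad (a-b)^2 \leq 2(a-c)^2 + 2(c-b)^2
\]
yields the Dirichlet-form bound $\mathcal{E}^*(f,f) \leq 4\mathcal{E}(f,f)$, and hence $\rho^*(D, A) \geq \rho(D, A)/4$. Combining these with Lemmas \ref{llrv}, \ref{lebar}, \ref{lriter} and with Theorem \ref{tallcc} (which under $(p_0), (VD), (TC)$ makes $\overline{E}$, $\lambda^{-1}$ and $\rho\cdot v$ mutually comparable on balls) transfers the resistance form of $(FK)$ from $\Gamma$ to $\Gamma^*$; Proposition \ref{pcycle} applied on $\Gamma^*$ then delivers the equivalent forms of $(FK)^*$, and $(TC)^*$ follows from the established comparability of $E$ and $E^*$ on balls.

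\textbf{Main obstacle.} The delicate step is the reverse direction of the exit-time/resistance comparison, i.e.\ $\overline{E}^* \leq C\,\overline{E}$ (equivalently $\rho^* \leq C\,\rho$), since both the natural coupling and the Dirichlet-form bound point spectrally in the wrong direction. The resolution is to exploit the aperiodicity of $\Gamma^*$ (guaranteed by the self-loops, which rule out the degenerate bipartite case where $\overline{E}^*$ could be infinite) together with the ball-scale estimates in Theorem \ref{tallcc}, and then to propagate from balls to arbitrary sets via Lemma \ref{lriter}.
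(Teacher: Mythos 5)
Your proposal is sound on the easy items — $\mu^*(x)=\mu(x)$, $(p_0)^*$ via $P_2(x,y)\geq p_0^2$, the self-loop observation, the ball inclusions $B^*(x,R)\subseteq B(x,2R)$ and (roughly) $B(x,2R)\subseteq\overline{B}^*(x,R)$ giving $V^*\asymp V$ and hence $(VD)^*$. These match the paper.

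The genuine gap is precisely where you flag "the main obstacle," and your proposed resolution does not close it. You correctly derive $E_x(A)\leq 2E_x^*(A)$ from the coupling $Y_k=X_{2k}$, and $\mathcal{E}^*(f,f)\leq 4\mathcal{E}(f,f)$ from $(a-b)^2\leq 2(a-c)^2+2(c-b)^2$; as you note, both give only $\lambda^{*}(A)\leq 4\lambda(A)$ and $\rho^*(D,A)\geq\rho(D,A)/4$, i.e.\ the wrong direction for an upper bound on $\lambda^{*}(A)^{-1}$ or $\rho^*\mu$. What you then offer — "exploit aperiodicity together with the ball-scale estimates of Theorem \ref{tallcc} and propagate via Lemma \ref{lriter}" — is circular and misapplied: Theorem \ref{tallcc} on $\Gamma^*$ already presupposes $(TC)^*$, which is part of what must be proved, and Lemma \ref{lriter} deduces an exit-time bound from a resistance bound (it does not propagate a bound from balls to arbitrary sets). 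No inequality pointing the right way is actually produced.

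The paper supplies two concrete ingredients that you are missing. For $(TC)^*$ it compares Green-function sums directly, showing $E^*(y,2R)\leq E(y,4R)$ (a two-step walk confined to $B^*(y,2R)$ at even times is confined to a slightly larger $\Gamma$-ball; summing even-time transition probabilities is dominated by summing all), and couples this with the degree bound $(\ref{deg})$ to bound odd-time contributions and conclude $E(x,R)\asymp E^*(x,R)$. For $(FK)^*$ it invokes the inequality $\lambda^*(A)\geq\lambda(\overline{A})$ from \cite[Lemma 4.3]{CG}; this is the opposite of what your Dirichlet-form computation yields, and it is not derivable from $\mathcal{E}^*\leq 4\mathcal{E}$ (the two forms compare test functions with different supports — $c_0(A)$ versus $c_0(\overline{A})$ — and in the bipartite case a $\Gamma$-edge $x\sim y$ need not correspond to any $\Gamma^*$-edge, so a pointwise comparison $\mathcal{E}^*\geq c\mathcal{E}$ simply fails). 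Without $\lambda^*(A)\geq\lambda(\overline{A})$ or an equivalent Green-function estimate, your argument does not establish $(FK)^*$ or $(TC)^*$.
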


\begin{proof}
The statement is evident for $\left( p_{0}\right) $ and $\left( VD\right) $.
Here it is worth to mention that $\mu ^{\ast }\left( x\right) =\mu \left(
x\right) $ \ and from $\left( \ref{mmccmm}\right) $ we know that $\mu \left(
x\right) \simeq \mu \left( y\right) $ \ if $x\sim y$. \ Let us observe that 
\begin{eqnarray}
B\left( x,2R\right) &\subset &\overline{B}^{\ast }\left( x,R\right) ,
\label{BBB} \\
B^{\ast }\left( x,R\right) &\subset &B\left( x,2R\right)
\end{eqnarray}%
\ and 
\begin{eqnarray}
V^{\ast }\left( y,2R\right) &\leq &V\left( y,4R\right) \leq C^{2}V\left(
x,R\right)  \label{vv*} \\
&\leq &C^{2}\mu \left( \overline{B}^{\ast }\left( x,R/2\right) \right) \leq
C^{2}V^{\ast }\left( x,R\right) .  \notag
\end{eqnarray}%
Let us note that we have shown that the volumes of the above balls are
comparable.

The next is to show $\left( TC\right) $. 
\begin{eqnarray*}
E^{\ast }\left( y,2R\right) &=&\sum_{z\in B^{\ast }\left( y,2R\right)
}\sum_{k=0}^{\infty }Q_{k}^{B^{\ast }\left( y,2R\right) }\left( y,z\right) \\
&\leq &\sum_{z\in B\left( y,4R\right) }\sum_{k=0}^{\infty }P_{2k}^{B\left(
y,4R\right) }\left( y,z\right) \\
&\leq &\sum_{z\in B\left( y,4R\right) }\sum_{k=0}^{\infty }P_{2k}^{B\left(
y,4R\right) }\left( y,z\right) +P_{2k+1}^{B\left( y,4R\right) }\left(
y,z\right) \\
&=&E\left( y,4R\right) \leq CE\left( x,R/2\right) \\
&=&\sum_{z\in B\left( x,R/2\right) }\sum_{k=0}^{\infty }P_{k}^{B\left(
x,R/2\right) }\left( x,z\right) \\
&=&\sum_{z\in B\left( x,R/2\right) }\sum_{k=0}^{\infty }P_{2k}^{B\left(
x,R/2\right) }\left( x,z\right) +P_{2k+1}^{B\left( x,R/2\right) }\left(
x,z\right) .
\end{eqnarray*}%
Now we use a trivial estimate. 
\begin{eqnarray*}
P_{2k+1}^{B\left( x,R\right) }\left( x,z\right) &=&\sum_{w\sim
z}P_{2k}^{B\left( x,R\right) }\left( x,w\right) P^{B\left( x,R\right)
}\left( w,z\right) \\
&\leq &\sum_{w\sim z}P_{2k}^{B\left( x,R\right) }\left( x,w\right) .
\end{eqnarray*}%
Summing up for all $z$ \ and recalling $\left( \ref{deg}\right) $ which
states that for a fixed $w\in \Gamma $, $\left\vert \left\{ w\sim z\right\}
\right\vert \leq \frac{1}{p_{0}},$ we receive that

\begin{eqnarray*}
\sum_{z\in B\left( x,R/2\right) }P_{2k+1}^{B\left( x,R/2\right) }\left(
x,z\right) &\leq &\sum_{z\in B\left( x,R/2\right) }\sum_{w\sim
z}P_{2k}^{B\left( x,R/2\right) }\left( x,w\right) \\
&\leq &C\sum_{w\in \overline{B}\left( x,R/2\right) }P_{2k}^{B\left(
x,R\right) }\left( x,w\right) .
\end{eqnarray*}%
As a result we obtain that 
\begin{eqnarray*}
E^{\ast }\left( y,2R\right) &\leq &C\sum_{z\in \overline{B}\left(
x,R/2\right) }\sum_{k=0}^{\infty }P_{2k}^{\overline{B}\left( x,R/2\right)
}\left( x,z\right) \\
&\leq &CE^{\ast }\left( x,R/2+1\right) \leq CE^{\ast }\left( x,R\right) .
\end{eqnarray*}%
This shows that $\left( TC\right) $ holds on $\Gamma ^{\ast }$. We have also
proven that 
\begin{equation}
cE^{\ast }\left( x,R\right) \leq E\left( x,R\right) \leq CE^{\ast }\left(
x,R\right) .  \label{ee*}
\end{equation}%
It is left to show that from 
\begin{equation}
\lambda (A)^{-1}\leq CE\left( x,R\right) \left( \frac{\mu \left( A\right) }{%
V\left( x,R\right) }\right) ^{\delta }
\end{equation}%
it follows that 
\begin{equation}
\lambda ^{\ast }(A)^{-1}\leq CE^{\ast }\left( x,R\right) \left( \frac{\mu
^{\ast }\left( A\right) }{V^{\ast }\left( x,R\right) }\right) ^{\delta }
\end{equation}%
holds as well. The inequality 
\begin{equation}
\lambda ^{\ast }\left( A\right) \geq \lambda \left( \overline{A}\right)
\label{l>l}
\end{equation}%
was given in \cite[Lemma 4.3]{CG}. \ Collecting the inequalities we get the
statement. 
\begin{eqnarray*}
\lambda ^{\ast }\left( A\right) ^{-1} &\leq &\lambda \left( \overline{A}%
\right) ^{-1}\leq CE\left( x,R+1\right) \left( \frac{\mu \left( \overline{A}%
\right) }{V\left( x,R+1\right) }\right) ^{\delta } \\
&\leq &CE^{\ast }\left( x,R\right) \left( \frac{\mu ^{\ast }\left( A\right) 
}{V^{\ast }\left( x,R\right) }\right) ^{\delta }
\end{eqnarray*}
\end{proof}

\begin{lemma}
\label{lp<pp}For all random walks on weighted graphs, $x,y\in A\subseteq
\Gamma ,$ $n,m\geq 0$%
\begin{equation}
p_{n+m}^{A}\left( x,y\right) \leq \sqrt{p_{2n}^{A}\left( x,x\right)
p_{2m}^{A}\left( y,y\right) }.  \label{p<pp}
\end{equation}
\end{lemma}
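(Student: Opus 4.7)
The plan is to prove this via Chapman--Kolmogorov together with Cauchy--Schwarz and the reversibility of the killed walk. First I would record the semigroup identity for the killed transition kernel: for any $x,y \in A$ and $n,m \geq 0$,
\begin{equation*}
p^{A}_{n+m}(x,y) = \sum_{z \in A} p^{A}_{n}(x,z)\, p^{A}_{m}(z,y)\, \mu(z).
\end{equation*}
This follows from iterating $P^{A}_{n+m}(x,y) = \sum_z P^{A}_{n}(x,z) P^{A}_{m}(z,y)$ and dividing by $\mu(y)$ after inserting $\mu(z)/\mu(z)$ in the appropriate place (equivalently, it is just the composition $P^A_{n+m} = P^A_n P^A_m$ written in terms of the density $p^A$).

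Next I would apply the Cauchy--Schwarz inequality, viewing the sum as an $\ell^{2}(A,\mu)$ inner product of $z \mapsto p^{A}_{n}(x,z)$ and $z \mapsto p^{A}_{m}(z,y)$:
\begin{equation*}
\sum_{z \in A} p^{A}_{n}(x,z)\, p^{A}_{m}(z,y)\, \mu(z) \leq \Bigl(\sum_{z\in A} p^{A}_{n}(x,z)^{2}\mu(z)\Bigr)^{1/2} \Bigl(\sum_{z\in A} p^{A}_{m}(z,y)^{2}\mu(z)\Bigr)^{1/2}.
\end{equation*}
Finally, the symmetry $p^{A}_{k}(u,v) = p^{A}_{k}(v,u)$, which is inherited from the reversibility $\mu_{x,y} = \mu_{y,x}$ of the original walk (restriction to $A$ keeps the operator self-adjoint on $\ell^{2}(A,\mu)$), lets me rewrite
\begin{equation*}
\sum_{z\in A} p^{A}_{n}(x,z)^{2}\mu(z) = \sum_{z\in A} p^{A}_{n}(x,z)\, p^{A}_{n}(z,x)\, \mu(z) = p^{A}_{2n}(x,x),
\end{equation*}
and analogously $\sum_{z\in A} p^{A}_{m}(z,y)^{2}\mu(z) = p^{A}_{2m}(y,y)$. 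Combining these three displays gives the claimed bound.

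There is no real obstacle here; the only ingredients are the semigroup property of the killed chain, Cauchy--Schwarz, and reversibility, all of which are available in this setting. The only thing to be slightly careful about is keeping the $\mu(z)$ factors consistent when switching between $P^{A}_{k}$ and $p^{A}_{k} = P^{A}_{k}/\mu$, but this is routine.
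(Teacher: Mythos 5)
Your proof is correct and is exactly the standard argument that the paper omits (the paper's own proof reads ``The proof is standard, hence omitted''): Chapman--Kolmogorov for the killed kernel, Cauchy--Schwarz in $\ell^2(A,\mu)$, and the symmetry $p^A_k(u,v)=p^A_k(v,u)$ inherited from reversibility.
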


\begin{proof}
The proof is standard, hence omitted.
\end{proof}

To complete the scheme of the proof we need the return from $\Gamma ^{\ast }$
to $\Gamma $. This is given in the following lemma.

\begin{lemma}
\label{lret}Assume that $\left( \Gamma ,\mu \right) $ \ satisfy $\left(
p_{0}\right) \left( VD\right) $ and $\left( TC\right) $. \ In addition if $%
\left( DUE\right) $ holds on $\left( \Gamma ^{\ast },\mu \right) $ , then it
holds $\left( \Gamma ,\mu \right) $.
\end{lemma}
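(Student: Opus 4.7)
The plan is to push the diagonal upper estimate $(DUE)$ from $\Gamma^{\ast}$ down to $\Gamma$ by exploiting the identity $p^{\ast}_n(x,y)=p_{2n}(x,y)$ and filling the odd times via Lemma~\ref{lp<pp}. First I would record the translation dictionary between the two graphs. Since $\mu^{\ast}(x)=\mu(x)$ (as computed in the proof of Lemma~\ref{lfkcsill}) and the Markov chain on $\Gamma^{\ast}$ has transition operator $Q=P^{2}$, one has
\begin{equation*}
p^{\ast}_n(x,y)=\frac{1}{\mu^{\ast}(y)}Q_n(x,y)=\frac{1}{\mu(y)}P_{2n}(x,y)=p_{2n}(x,y).
\end{equation*}
From (\ref{vv*}) and (\ref{ee*}) of Lemma~\ref{lfkcsill} we already have $V^{\ast}(x,R)\simeq V(x,R)$ and $E^{\ast}(x,R)\simeq E(x,R)$; combined with the doubling of $e(x,\cdot)$ that follows from $(TC)$ via (\ref{eb}), this yields $e^{\ast}(x,n)\simeq e(x,n)\simeq e(x,2n)$.

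Next, for an even time $n=2k$, applying the hypothesis $(DUE)$ on $\Gamma^{\ast}$ at step $k$ and passing through the dictionary gives
\begin{equation*}
p_{2k}(x,x)=p^{\ast}_k(x,x)\leq \frac{C}{V^{\ast}(x,e^{\ast}(x,k))}\leq \frac{C'}{V(x,e(x,2k))},
\end{equation*}
which is $(DUE)$ on $\Gamma$ for even $n$. For an odd time $n=2k+1$ I would invoke Lemma~\ref{lp<pp} with $A=\Gamma$, $y=x$ and exponents $k$, $k+1$:
\begin{equation*}
p_{2k+1}(x,x)\leq \sqrt{\,p_{2k}(x,x)\,p_{2k+2}(x,x)\,}.
\end{equation*}
Both factors on the right are controlled by the even-time estimate just obtained, and volume doubling together with $e(x,k)\simeq e(x,2k+1)$ absorb the time shift, producing
\begin{equation*}
p_{2k+1}(x,x)\leq \frac{C''}{V(x,e(x,2k+1))}.
\end{equation*}

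Should $\Gamma^{\ast}$ decompose into two bipartite components due to periodicity of $P$, the argument is applied on the component containing $x$; in that case odd-step return probabilities on $\Gamma$ vanish automatically and the inequality for odd $n$ is trivial. The main obstacle is purely bookkeeping: tracking that the constants coming from $V^{\ast}\simeq V$, $E^{\ast}\simeq E$, $e^{\ast}\simeq e$ and the doubling of $e$ line up cleanly across the translation and the parity splitting. Once Lemma~\ref{lfkcsill} and Lemma~\ref{lp<pp} are in hand, no further analytic input is required.
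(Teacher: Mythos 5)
Your proof is correct and mirrors the paper's structure: carry $(DUE)$ from $\Gamma^{\ast}$ to even times on $\Gamma$ via the dictionary $p^{\ast}_n(x,x)=p_{2n}(x,x)$, $V^{\ast}\simeq V$, $E^{\ast}\simeq E$ (hence $e^{\ast}\simeq e$), then fill in odd times. The one genuine divergence is how the odd step is handled. You apply Lemma~\ref{lp<pp} with $A=\Gamma$, $n=k$, $m=k+1$ to obtain $p_{2k+1}(x,x)\leq\sqrt{p_{2k}(x,x)\,p_{2k+2}(x,x)}$, control both factors by the even-time bound, and absorb the time shift using $(VD)$, $(TC)$ and $(\ref{eb})$. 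The paper instead uses the spectral monotonicity $P_{2n}^{B(x,R)}(x,x)\geq P_{2n+1}^{B(x,R)}(x,x)$ for the Dirichlet kernel on finite balls (the eigenvalues of $P^{B}$ lie in $[-1,1]$, so $\lambda^{2n}\geq\lambda^{2n+1}$) and then lets $R\to\infty$, which needs only the even-time bound at a single index. Both are routine; yours has the small advantage of invoking a lemma already stated and used elsewhere in the paper, while the paper's monotonicity argument avoids the extra doubling step between $2k$ and $2k+2$. Your remark on the bipartite/disconnected case is consistent with the paper's earlier comment that when $\Gamma^{\ast}$ splits one works on the component containing $x$, and there the odd-time return probabilities on $\Gamma$ vanish so the odd case is vacuous.
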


\begin{proof}
The condition states that%
\begin{equation*}
q_{n}\left( x,x\right) \leq \frac{C}{V^{\ast }\left( x,e^{\ast }\left(
x,n\right) \right) }.
\end{equation*}%
Then from the definition of $q$,\ $\left( \ref{vv*}\right) $ and $\left( \ref%
{ee*}\right) $ it follows that%
\begin{equation*}
p_{2n}\left( x,x\right) \leq \frac{C}{V\left( x,e\left( x,2n\right) \right) }%
.
\end{equation*}%
Finally for odd times the statement follows by a standard argument. From the
spectral decomposition of $P_{n}^{B\left( x,R\right) }$ for finite balls one
has that 
\begin{equation*}
P_{2n}^{B\left( x,R\right) }\left( x,x\right) \geq P_{2n+1}^{B\left(
x,R\right) }\left( x,x\right)
\end{equation*}%
and consequently 
\begin{eqnarray*}
p_{2n}\left( x,x\right) &=&\lim_{R\rightarrow \infty }p_{2n}^{B\left(
x,R\right) }\left( x,x\right) \\
&\geq &\lim_{R\rightarrow \infty }p_{2n+1}^{B\left( x,R\right) }\left(
x,x\right) =p_{2n+1}\left( x,x\right) ,
\end{eqnarray*}%
which gives the statement using $\left( VD\right) ,\left( TC\right) $ and $%
\left( \ref{eb}\right) .$
\end{proof}

\begin{lemma}
\label{lprep}If $\left( p_{0}\right) $ is true and $\left( FK\right) :$ 
\begin{equation}
\lambda (A)^{-1}\leq Ca\mu \left( A\right) ^{\delta }
\end{equation}%
holds for 
\begin{equation*}
a=\frac{E\left( x,R\right) }{V\left( x,R\right) ^{\delta }}\simeq \frac{%
E^{\ast }\left( x,R\right) }{V^{\ast }\left( x,R\right) ^{\delta }}
\end{equation*}%
and all $A\subset B^{\ast }\left( x,R\right) $ on $\left( \Gamma ^{\ast
},\mu \right) $, then for all $x,y\in \Gamma $ 
\begin{equation*}
q_{n}^{B^{\ast }\left( x,R\right) }\left( y,y\right) \leq C\left( \frac{a}{n}%
\right) ^{1/\delta }.
\end{equation*}
\end{lemma}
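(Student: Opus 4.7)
The plan is a Nash--Moser iteration of the sub-Markovian killed semigroup generated by $Q^{A}$ on $A=B^{\ast}(x,R)$, carried out entirely inside $\Gamma^{\ast}$. The reason for working on $\Gamma^{\ast}$ rather than on $\Gamma$ is precisely that the self-adjoint operator $Q$ has non-negative spectrum (it is $P^{2}$ pulled back from $\Gamma$), so the one-step $\ell^{2}$-decrement of the killed semigroup can be estimated by the Dirichlet form without the sign obstruction that a possible eigenvalue of $P$ near $-1$ on bipartite pieces would cause.

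First I would feed the Faber--Krahn hypothesis $\lambda^{\ast}(A)^{-1}\le Ca\,\mu(A)^{\delta}$ (valid for every $A\subset B^{\ast}(x,R)$) into Lemma~\ref{lNash}, obtaining on $\Gamma^{\ast}$ the Nash-type inequality
\[
\|f\|_{2}^{\,2+2\delta}\le C_{1}a\,\|f\|_{1}^{\,2\delta}\,\mathcal{E}^{\ast}(f,f)
\]
for every non-negative, finitely supported $f$ in $B^{\ast}(x,R)$. Next, set $u_{n}(z):=q_{n}^{B^{\ast}(x,R)}(y,z)$, so that $u_{n+1}=Q^{A}u_{n}$ with $u_{0}=\delta_{y}/\mu(y)$. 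Since $Q^{A}$ preserves non-negativity and is sub-Markovian, one has $u_{n}\ge 0$ and $\|u_{n}\|_{1}\le 1$; by reversibility, $\|u_{n}\|_{2}^{2}=q_{2n}^{B^{\ast}(x,R)}(y,y)$, so everything reduces to controlling the decreasing sequence $h_{n}:=\|u_{n}\|_{2}^{2}$.

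The decisive step is the operator comparison
\[
h_{n}-h_{n+1}=\langle(I-(Q^{A})^{2})u_{n},u_{n}\rangle\ge\langle(I-Q^{A})u_{n},u_{n}\rangle=\mathcal{E}^{\ast}(u_{n},u_{n}),
\]
which rests on the elementary inequality $1-\lambda^{2}\ge 1-\lambda$ for $\lambda\in[0,1]$, applied to the spectrum of $Q^{A}$. Combining this with the Nash inequality above and $\|u_{n}\|_{1}\le 1$ produces the discrete differential inequality
\[
h_{n}-h_{n+1}\ge\frac{h_{n}^{\,1+\delta}}{C_{1}\,a}.
\]

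Finally, a standard comparison with the ODE $y'=-y^{\,1+\delta}/(C_{1}a)$ (equivalently, telescoping $h_{n+1}^{-\delta}-h_{n}^{-\delta}\ge c/a$ using monotonicity of $h_{n}$) yields $h_{n}\le C(a/n)^{1/\delta}$, which is the claimed bound at even times; odd times then follow at once from $q_{n+1}(y,y)\le q_{n}(y,y)$, itself a consequence of the non-negativity of the spectrum of $Q^{A}$. I expect the only genuinely subtle point to be the operator inequality $I-(Q^{A})^{2}\ge I-Q^{A}$: this is exactly what the detour from $(\Gamma,P)$ to $(\Gamma^{\ast},Q)$ is designed to produce, and without it the analogous scheme run directly on $\Gamma$ with the operator $P$ breaks down.
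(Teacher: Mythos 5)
Your proposal is correct and follows essentially the same route as the paper, which simply defers this step to Proposition~5.1 of \cite{GT1}: a Nash-type iteration of the killed semigroup. A couple of observations worth recording. First, you have silently (and correctly) repaired what appears to be a typo in the statement of Lemma~\ref{lNash} as printed: the parameter $a$ should sit on the side of $\mathcal{E}(f,f)$, i.e. $\Vert f\Vert_2^{2+2\delta}\le C\,a\,\Vert f\Vert_1^{2\delta}\mathcal{E}(f,f)$, which is exactly what you use and what the derivation from $\lambda(A)^{-1}\le Ca\,\mu(A)^{\delta}$ actually yields. Second, your justification for working on $\Gamma^{\ast}$ is subtly different from the paper's. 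The text preceding the lemma motivates the detour by the diagonal positivity $q(x,x)\ge c>0$, which gives the spectrum of $Q^{A}$ a gap at $-1$ (so that $1-\lambda^{2}\ge c(1-\lambda)$); you instead observe directly that $Q^{A}=\mathbf{1}_{A}P^{2}\mathbf{1}_{A}=(\mathbf{1}_{A}P)(\mathbf{1}_{A}P)^{\ast}\ge 0$, so the spectrum is in $[0,1]$ and $1-\lambda^{2}\ge 1-\lambda$ holds outright. Your version is marginally cleaner since it needs no lower bound on the loop weights, but both serve the same purpose: to make the $\ell^{2}$-decrement $h_{n}-h_{n+1}=\langle(I-(Q^{A})^{2})u_{n},u_{n}\rangle$ dominate $\mathcal{E}^{\ast}(u_{n},u_{n})$, which is precisely the step that fails for $P$ on $\Gamma$ itself. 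The remaining steps — the Nash inequality restricted to functions supported in $B^{\ast}(x,R)$ (valid because the level sets of such $f$ stay inside $B^{\ast}(x,R)$, so the local Faber--Krahn hypothesis suffices), the telescoping of $h_{n}^{-\delta}$, the identification $h_{n}=q_{2n}^{B^{\ast}(x,R)}(y,y)$, and the monotonicity $q_{n+1}^{A}(y,y)\le q_{n}^{A}(y,y)$ from $Q^{A}\ge 0$ to cover odd times — are all sound and match the standard argument being referenced.
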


\begin{proof}
The proof is a slight modification of the steps proving $\left( a\right)
\Longrightarrow \left( b\right) $ in Proposition 5.1 of \cite{GT1} so we
omit it. This final statement can be reformulated for $y\in \Gamma $ as
follows 
\begin{equation*}
q_{2n}^{B^{\ast }}\left( y,y\right) \leq \frac{C}{V\left( x,R\right) }\left( 
\frac{E\left( x,R\right) }{n}\right) ^{1/\delta }.
\end{equation*}
\end{proof}

\bigskip

Now we consider the following path decompositions.

\begin{lemma}
\label{lcut}Let $p_{n}\left( x,y\right) $ the heat kernel of the random walk
on an arbitrary weighted graph $\left( \Gamma ,\mu \right) .$ \ Let $%
A\subset \Gamma ,x,y\in A,n>0$ then 
\begin{equation}
p_{n}\left( x,y\right) \leq p_{n}^{A}\left( x,y\right) +P_{x}\left(
T_{A}<n\right) \max\limits_{\substack{ z\in \partial A  \\ 0\leq k<n}}%
p_{k}\left( z,y\right) ,  \label{cut1}
\end{equation}%
\begin{eqnarray}
p_{n}\left( x,y\right) &\leq &p_{n}^{A}\left( x,y\right) +P_{x}\left(
T_{A}<n/2\right) \max\limits_{\substack{ z\in \partial A  \\ n/2\leq k<n}}%
p_{k}\left( z,y\right)  \label{cut2} \\
&&+P_{y}\left( T_{A}<n/2\right) \max\limits_{\substack{ z\in \partial A  \\ %
n/2\leq k<n}}p_{k}\left( z,x\right) .
\end{eqnarray}
\end{lemma}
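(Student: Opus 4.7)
Both inequalities rest on the first-exit-time decomposition followed by the strong Markov property. Conditioning $\{X_n=y\}$ on whether the walk stays in $A$ and, if not, on the pair $(T_A,X_{T_A})$ gives the master identity
\[
p_n(x,y) \;=\; p_n^A(x,y) \;+\; \sum_{j=1}^{n-1}\sum_{z\in\partial A} \mathbb{P}_x\!\left(T_A=j,\,X_j=z\right)\, p_{n-j}(z,y),
\]
the $j=n$ term being absent since $y\in A$ forces $T_A\neq n$. To obtain \eqref{cut1} I bound $p_{n-j}(z,y)$ uniformly by $\max\{p_k(z,y):z\in\partial A,\,0\le k<n\}$ and pull this maximum out of the double sum; the remaining sum equals $\mathbb{P}_x(T_A<n)$.

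For \eqref{cut2} I split the $j$-sum at the midpoint $n/2$. The early-exit piece ($j<n/2$, hence $n-j>n/2$) yields the first non-trivial term $\mathbb{P}_x(T_A<n/2)\max_{z\in\partial A,\,n/2\le k<n}p_k(z,y)$ by the same argument as above, now with the maximum restricted to $k\ge n/2$. For the late-exit piece ($j\ge n/2$) I invoke time reversal: with $\omega^{\mathrm{rev}}_k=\omega_{n-k}$, the pathwise detailed balance identity $\mu(x)\mathbb{P}_x(\omega)=\mu(y)\mathbb{P}_y(\omega^{\mathrm{rev}})$ combined with the inclusion $\{T_A(\omega)\ge n/2\}\subseteq\{L_A(\omega)\ge n/2\}$ and the equivalence $L_A(\omega)\ge n/2\iff T_A(\omega^{\mathrm{rev}})\le n/2$, where $L_A(\omega):=\max\{k:\omega_k\notin A\}$, rewrites this piece as a sum over reversed trajectories from $y$ that exit $A$ no later than time $n/2$. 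Re-applying the master identity and the $\max$-argument to that sum produces the second non-trivial term $\mathbb{P}_y(T_A<n/2)\max_{z\in\partial A,\,n/2\le k<n}p_k(z,x)$.

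The only real obstacle is the book-keeping around the midpoint index $j=n/2$ for even $n$, so that both probabilities come out strictly less than $n/2$ while both maxima still admit $k=n/2$; this is pure accounting and is settled by placing $j=n/2$ on the side whose companion maximum already includes $k=n/2$ in its allowed range. No ingredient beyond the strong Markov property and the reversibility of the chain is needed.
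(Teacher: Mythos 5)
Your argument is correct and matches the route the paper takes via its citation of \cite[Lemma 2.5]{G1}: the first-exit decomposition from $x$ gives $(\ref{cut1})$, and for $(\ref{cut2})$ the late-exit piece is converted by time reversal (using reversibility of $\mu$ and the last-exit time $L_A$) into a first-exit decomposition from $y$, which is exactly what the phrase ``from $x$ and $y$ as well'' in the paper's one-line proof refers to. The only inaccuracy is in your final book-keeping remark: whichever side the index $j=n/2$ is assigned to, one of the two exit-probability factors necessarily comes out as $\mathbb{P}(T_A\le n/2)$ rather than strictly less than $n/2$, so the mismatch cannot be removed by the reassignment you propose; this $\le$ versus $<$ discrepancy at a single integer is, however, immaterial to every application of the lemma and is already implicit in the paper's own loose statement.
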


\begin{proof}
Both inequalities follow, as in \cite[Lemma 2.5]{G1}, from the first exit
decomposition starting from $x$ or from $x$ and $y$ as well. \ 
\end{proof}

\subsection{Proof of the upper estimates}

\begin{proof}[Proof of Theorem \protect\ref{tFK}]
First we show the implication $\left( FK\right) \Longrightarrow \left(
DUE\right) $ on $\Gamma ^{\ast }$ assuming $\left( p_{0}\right) ,\left(
VD\right) $ and $\left( TC\right) $. \ We follow the main lines of \cite{G1}%
. Let we choose $r$ so that $Ln=E\left( x,r\right) $ for a large $L>0$. From 
$\left( \ref{cut2}\right) $ we have that for $B=B^{\ast }\left( x,r\right) $ 
\begin{equation}
q_{n}\left( x,x\right) \leq q_{n}^{B}\left( x,x\right) +2Q_{x}\left(
T_{B}<n/A\right) \max\limits_{\substack{ z\in \partial B  \\ n/A\leq k<n}}%
q_{k}\left( z,x\right) .  \label{prek1}
\end{equation}%
From $\left( \ref{p<pp}\right) $ one gets that for all $n/A\leq k<n$%
\begin{equation*}
q_{k}\left( z,x\right) \leq \sqrt{q_{k}\left( z,z\right) q_{k}\left(
x,x\right) }\leq \max\limits_{v\in \overline{B}}q_{k}\left( v,v\right) \leq
C_{1}\max\limits_{v\in \overline{B}}q_{\left\lfloor n/A\right\rfloor }\left(
v,v\right) .
\end{equation*}%
This results in $\left( \ref{prek1}\right) $ that for some $x_{1}\in 
\overline{B}$ 
\begin{equation}
q_{n}\left( x,x\right) \leq q_{n}^{B}\left( x,x\right) +2Q_{x}\left(
T_{B}<n/A\right) C_{1}q_{\left\lfloor n/A\right\rfloor }\left(
x_{1},x_{1}\right) .  \label{p1}
\end{equation}%
We continue this procedure. \ In the $i$-th step we have 
\begin{equation}
q_{n_{i-1}}\left( x_{i},x_{i}\right) \leq q_{n_{i}}^{B_{i}}\left(
x_{i},x_{i}\right) +2Q_{x_{i}}\left( T_{B_{i}}<n_{i+1}\right)
C_{1}q_{n_{i}}\left( x_{i+1},x_{i+1}\right) ,  \label{pi}
\end{equation}%
where $n_{i}=\left\lfloor n/A^{i}\right\rfloor ,r_{i}=e\left(
x_{i},Ln_{i}\right) ,B_{i}=B\left( x_{i},r_{i}\right) ,x_{i+1}\in \overline{B%
}_{i}$. Let $m=\left\lfloor \log _{A}n\right\rfloor $ and we stop the
iteration at $m$. This means that $1\leq \left\lfloor \frac{n}{A^{m}}%
\right\rfloor =n_{m}<A$. Now we choose $A$. From the definition of $n_{i}$
and from $\left( TC\right) $ it follows that%
\begin{equation*}
A=\frac{Ln_{i}}{Ln_{i+1}}=\frac{E\left( x_{i,}r_{i}\right) }{E\left(
x_{i+1},r_{i+1}\right) }\leq \frac{E\left( x_{i,}2r_{i}\right) }{E\left(
x_{i+1},r_{i+1}\right) }\leq C\left( \frac{2r_{i}}{r_{i+1}}\right) ^{\beta },
\end{equation*}%
which results by $\sigma =2\left( \frac{C}{A}\right) ^{\frac{1}{\beta }}<1/2$
that 
\begin{equation}
r_{i+1}\leq \sigma r_{i}  \label{rdecr}
\end{equation}%
if $A>4^{\beta }C$. \ From the choice of the constants one obtains that 
\begin{equation}
d\left( x,x_{m}\right) \leq r+r_{1}+...+r_{m}\leq r\sum_{i=0}^{m}\sigma
^{i}<r\frac{1}{1-\sigma }\leq 2r.  \label{dist}
\end{equation}%
From Lemma \ref{lprep} the first term can be estimated as follows 
\begin{eqnarray*}
q_{n_{i}}^{B_{i}}\left( x_{i},x_{i}\right) &\leq &\frac{C}{V\left(
x_{i},r_{i}\right) }\left( \frac{E\left( x,r_{i}\right) }{n_{i}}\right)
^{1/\delta } \\
&=&\frac{C}{V\left( x_{i},r_{i}\right) }L^{1/\delta }=\frac{CL^{1/\delta }}{%
V\left( x,2r\right) }\frac{V\left( x,2r\right) }{V\left( x_{i},r_{i}\right) }
\\
&\leq &\frac{CL^{1/\delta }}{V\left( x,r\right) }\left( \frac{2}{\sigma }%
\right) ^{\alpha i},
\end{eqnarray*}%
where in the last step $\left( \ref{dist}\right) $ and $\left( VD\right) $
have been used. Let us observe that by definition of $k=k\left(
x_{i},n_{i+1},r_{i}\right) $ 
\begin{equation*}
\frac{n_{i+1}}{k+1}>\min\limits_{y\in B_{i}}E\left( y,\frac{r_{i}}{k+1}%
\right)
\end{equation*}%
and by $\left( TC\right) $ 
\begin{eqnarray*}
Ln_{i} &=&E\left( x_{i},r_{i}\right) \leq CE\left( y,r_{i}\right) \leq
C\left( k+1\right) ^{\beta }E\left( y,\frac{r_{i}}{k+1}\right) \\
&\leq &C\left( k+1\right) ^{\beta -1}n_{i},
\end{eqnarray*}%
which results that $k>\left( L/C\right) ^{\frac{1}{\beta -1}}-1$ and 
\begin{equation*}
Q_{x_{i}}\left( T_{B}<n_{i+1}\right) \leq C\exp \left[ -ck\left(
x_{i},n_{i+1},r_{i}\right) \right] \leq C\exp \left[ -c\left( \frac{L}{C}%
\right) ^{\frac{1}{\beta -1}}\right] .
\end{equation*}%
This means that 
\begin{equation*}
Q_{x_{i}}\left( T_{B}<n_{i+1}\right) \leq \frac{\varepsilon }{2}
\end{equation*}%
if $L$ is chosen to be enough large. \ Inserting this into $\left( \ref{pi}%
\right) $ one obtains 
\begin{equation}
q_{n_{i-1}}\left( x_{i},x_{i}\right) \leq \frac{CL^{1/\delta }}{V\left(
x,r\right) }\left( \frac{2}{\sigma }\right) ^{\alpha i}+\varepsilon
C_{1}q_{n_{i}}\left( x_{i+1},x_{i+1}\right) .
\end{equation}%
Summing up the iteration results in%
\begin{equation}
q_{n}\left( x,x\right) \leq \frac{CL^{1/\delta }}{V\left( x,r\right) }%
\sum_{i=1}^{m}\left( \left( \frac{2}{\sigma }\right) ^{\alpha }\varepsilon
\right) ^{i}+\left( \varepsilon C_{1}\right) ^{m}q_{m}\left(
x_{m},x_{m}\right) .  \label{p<3}
\end{equation}%
Choosing $L$ enough large $\varepsilon <\min \left\{ \left( \frac{\sigma }{2}%
\right) ^{\alpha },\frac{1}{C_{1}}\right\} $ can be ensured. This means that
the sum in the first term is bounded by $1/\left( 1-\varepsilon \left( \frac{%
2}{\sigma }\right) ^{\alpha }\right) <C$. The second term can be treated as
follows. 
\begin{equation*}
q_{m}\left( x_{m},x_{m}\right) =\frac{1}{\mu \left( x_{m}\right) }%
Q_{m}\left( x_{m},x_{m}\right) \leq \frac{1}{\mu \left( x_{m}\right) }.
\end{equation*}%
From $\left( \ref{rdecr}\right) $ we have that 
\begin{eqnarray*}
\frac{1}{\mu \left( x_{m}\right) } &=&\frac{1}{V\left( x,r\right) }\frac{%
V\left( x,2r\right) }{\mu \left( x_{m}\right) } \\
&\leq &\frac{1}{V\left( x,r\right) }\left( 2r\right) ^{\alpha }.
\end{eqnarray*}%
Consequently we are ready if 
\begin{equation*}
\left( 2r\right) ^{\alpha }\varepsilon ^{m}<C^{\prime }.
\end{equation*}%
Let us remark that $E\left( z,r\right) \geq r$. which implies that $e\left(
x,n\right) \leq n.$ \ From the definition of $m$ and $E\left( x,r\right)
=Ln, $ 
\begin{equation*}
\left( 2r\right) ^{\alpha }\varepsilon ^{m}\leq \left( 2r\right) ^{\alpha
}\varepsilon ^{\log _{A}n}\leq \left[ 2E\left( x,r\right) \right] ^{\alpha
}n^{\log _{A}\varepsilon }=\left( 2L\right) ^{\alpha }n^{\alpha +\log
_{A}\varepsilon }\leq C
\end{equation*}%
if $\varepsilon <A^{-\alpha }$, $L$ is enough large. \ Finally from $\left( %
\ref{p<3}\right) $ we receive that 
\begin{eqnarray}
q_{n}\left( x,x\right) &\leq &\frac{CL^{1/\delta }}{V\left( x,r\right) }%
\sum_{i=1}^{m}\left( 2^{\alpha }\varepsilon \right) ^{i}+\left( \varepsilon
C_{1}\right) ^{m}q_{n_{m}}\left( x_{m},x_{m}\right) \\
&\leq &\frac{C}{V\left( x,r\right) }=\frac{C}{V\left( x,e\left( x,Ln\right)
\right) }\leq \frac{C}{V\left( x,e\left( x,n\right) \right) },
\end{eqnarray}%
if $\varepsilon <\min \left\{ \left( \frac{\sigma }{2}\right) ^{\alpha },%
\frac{1}{C_{1}},A^{-\alpha }\right\} ,$ absorbing all the constants into $C$%
. This means that $\left( DUE\right) $ holds on $\Gamma ^{\ast }$ and by
Lemma $\left( \ref{lret}\right) $ $\left( DUE\right) $ holds on $\Gamma $ as
well. It was shown in \cite{Tfull} that under the assumption $\left(
p_{0}\right) $%
\begin{equation*}
\left( VD\right) +\left( TC\right) +\left( DUE\right) \Longrightarrow \left(
UE\right) .
\end{equation*}%
The reverse implication $\left( UE\right) \Longrightarrow \left( DUE\right) $
is evident, hence the proof of Theorem \ref{tFK} and \ref{tmain} is
complete. Let us mention that the implication $\left( DUE\right)
\Longrightarrow \left( FK\right) $ can be seen as it was given in \cite{CG}
without any essential change and the proof of Theorem \ref{tmain} and \ref%
{tFK} is complete.
\end{proof}

\subsection{A Davies-Gaffney type inequality}

\label{sDI}

We provide here a different proof of the upper estimate which might be
interesting on its own. The proof has two ingredients. \ The first one is
the generalization of the Davies-Gaffney inequality. First we need a theorem
from \cite{TD}.

\begin{theorem}
\label{tebarimp} If $\left( p_{0}\right) $ and $\left( \overline{E}\right) $
hold then there are $c,C>0$ such that for all $x\in \Gamma ,n,R>0$%
\begin{equation}
\mathbb{P}_{x}(T_{x,R}<n)\leq C\exp \left[ -ck\left( x,n,R\right) \right] .
\label{lpsi}
\end{equation}
\end{theorem}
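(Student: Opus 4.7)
The plan is a chaining argument via successive excursions from balls of radius $r\simeq R/k$, where $k=k(x,n,R)$. Introduce stopping times $\tau_{0}=0$ and $\tau_{i+1}=\inf\{s>\tau_{i}:d(X_{s},X_{\tau_{i}})\geq r\}$, with increments $\xi_{i}=\tau_{i+1}-\tau_{i}$. Since $(p_{0})$ forces each step of $X$ to have length one, $d(X_{\tau_{i}},X_{\tau_{i-1}})\leq r+1$; by the triangle inequality the walk requires at least $m$ such excursions to reach distance $R$ from $x$, where $m$ is a fixed positive fraction of $k$ (one can arrange $m\geq k/4$, using $k\leq R$, which follows from $E(z,r)\geq r$). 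Hence $\{T_{x,R}<n\}\subseteq\{\tau_{m}<n\}$, and it suffices to estimate $\mathbb{P}_{x}(\tau_{m}<n)$.

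Next I would prove a uniform one-step lower-tail estimate. Splitting $E(z,r)=\mathbb{E}_{z}[T_{z,r}]$ according to $\{T_{z,r}\leq t\}$ versus $\{T_{z,r}>t\}$ and applying the strong Markov property at time $t$ yields the elementary bound $E(z,r)\leq t+\overline{E}(z,r)\,\mathbb{P}_{z}(T_{z,r}>t)$. Choosing $t=E(z,r)/2$ and invoking $(\overline{E})$ gives
$$\mathbb{P}_{z}\bigl(T_{z,r}\geq E(z,r)/2\bigr)\geq\frac{E(z,r)/2}{\overline{E}(z,r)}\geq\frac{1}{2C}=:p.$$
The defining inequality $n/k\leq qE(z,r)$ holds for every $z\in B(x,R)$, so $E(z,r)\geq n/(kq)$, and by the strong Markov property
$$\mathbb{P}\bigl(\xi_{i}\geq n/(2kq)\,\bigm|\,\mathcal{F}_{\tau_{i}}\bigr)\geq p$$
uniformly in $i$. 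A Chernoff-type argument then finishes: the count $N=|\{i<m:\xi_{i}\geq n/(2kq)\}|$ stochastically dominates a $\mathrm{Binom}(m,p)$ variable, so $\mathbb{P}_{x}(N<mp/2)\leq C_{1}e^{-c_{1}m}\leq C_{2}e^{-c_{2}k}$. On the complementary event one has $\tau_{m}\geq(mp/2)(n/(2kq))\geq c\,np/q$ for an absolute constant $c>0$, which strictly exceeds $n$ once $q$ is chosen small enough (a constraint on the universal constant $q$ hidden in the definition of $k_{z}(n,R)$). Combining the two cases gives $\mathbb{P}_{x}(T_{x,R}<n)\leq Ce^{-ck}$.

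The main obstacle I anticipate is a boundary issue: once $X_{\tau_{i}}$ has left $B(x,R)$, the bound $E(X_{\tau_{i}},r)\geq n/(kq)$ is not automatic, since $k(x,n,R)$ minimises only over $z\in B(x,R)$. This is resolved either by truncating the chain at $T_{x,R}$ (after which the event we wish to bound has already occurred, and only improves the estimate) or by running the construction inside $B(x,2R)$ and invoking $(\overline{E})$ to transfer the required control to points just outside $B(x,R)$. Apart from careful tracking of these constants and exclusion of the degenerate case $r=0$ (handled by the observation $k\leq R$), the argument is routine.
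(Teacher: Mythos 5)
The paper itself gives no proof of this theorem: the text of the proof environment is just ``See Theorem 5.1 \cite{TD}.'' So there is no internal argument to compare yours against; I can only assess the proposal on its own merits, and on those merits it is correct and is the standard chaining-via-excursions argument that such results in the Telcs/Grigor'yan literature use.

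A few remarks on details that are worth tightening. First, the reduction from $\{T_{x,R}<n\}$ to $\{\tau_m<n\}$ is fine, and in fact you can take $m=k-1$ outright: with $r=\lfloor R/k\rfloor\le R/k$ each completed excursion advances the walk by exactly $r$ from its previous centre, and $d(X_{\tau_i},x)\le ir<R$ for $i\le k-1$, so at least $k-1$ excursions complete before the walk leaves $B(x,R)$ and all of their starting points lie inside $B(x,R)$; this simultaneously fixes the boundary worry you raise at the end. Second, the elementary bound $E(z,r)\le t+\overline{E}(z,r)\,\mathbb{P}_z(T_{z,r}>t)$ and the resulting $\mathbb{P}_z(T_{z,r}>E(z,r)/2)\ge 1/(2C)$ are exactly right; this is the place where $(\overline{E})$ is used and nothing more is needed. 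Third, the stochastic-domination and Chernoff step is routine once the conditional lower bound $\mathbb{P}(\xi_i\ge n/(2kq)\mid\mathcal{F}_{\tau_i})\ge p$ is in hand.

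Two points deserve an explicit sentence in a written-up version. (i) You assert ``the defining inequality $n/k\le qE(z,r)$ holds for every $z\in B(x,R)$'' with $k=k(x,n,R)=\min_z k_z(n,R)$ and $r=\lfloor R/k\rfloor$. From $k\le k_z$ alone this does not follow formally, because as $k$ decreases both $n/k$ and $E(z,\lfloor R/k\rfloor)$ increase; one needs the (intended, and in practice satisfied) monotonicity that the defining inequality holds for all $j\le k_z$, i.e.\ that $k\mapsto kE(z,\lfloor R/k\rfloor)$ is essentially nonincreasing. This is how the kernel $k_z$ is meant to be read, but it should be said. (ii) The constraint ``once $q$ is chosen small enough'' is fine provided one regards $q$ as a free universal constant in the definition of the kernel (which is the usual convention); if $q$ were prescribed independently one would instead absorb the discrepancy into the constants $c,C$, which is also straightforward here since the kernel for a smaller $q'$ differs from that for $q$ only by a multiplicative constant. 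With those two clarifications the argument is complete.
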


\begin{proof}[Proof of Theorem \protect\ref{tebarimp}]
See Theorem 5.1 \cite{TD}.
\end{proof}

\begin{notation}
Denote 
\begin{equation}
k\left( n,A,B\right) =\min\limits_{z\in A}k\left( z,n,d\right) ,
\label{kapdef1}
\end{equation}%
\ where $d=d\left( A,B\right) $ and 
\begin{equation}
\kappa \left( n,A,B\right) =\max \left\{ k\left( n,A,B\right) ,k\left(
n,B,A\right) \right\} .  \label{kapdef2}
\end{equation}
\end{notation}

\begin{theorem}
\label{tGDI}If $\left( \overline{E}\right) $ holds for a reversible Markov
chain then there is a constant $c>0$ such that for all $A,B\subset V,$ the
Davies-Gaffney type inequality $\left( DG\right) $%
\begin{equation}
\sum_{x\in A,y\in B}p_{n}(x,y)\mu (x)\mu \left( y\right) \leq \left[ \mu
\left( A\right) \mu \left( B\right) \right] ^{1/2}\exp \left( -c\kappa
\left( n,A,B\right) \right)  \tag{DG}  \label{D}
\end{equation}%
holds.
\end{theorem}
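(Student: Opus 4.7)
The plan is to reduce the Davies-Gaffney sum to hitting probabilities, apply the exit time estimate of Theorem \ref{tebarimp}, and then symmetrize via reversibility.

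First I would rewrite the left-hand side. Using $p_n(x,y)\mu(y)=P_n(x,y)$, the quantity to estimate is
\begin{equation*}
I=\sum_{x\in A,y\in B}p_n(x,y)\mu(x)\mu(y)=\sum_{x\in A}\mu(x)\,\mathbb{P}_x(X_n\in B).
\end{equation*}
Set $d=d(A,B)$. For every $x\in A$ and $y\in B$ we have $d(x,y)\ge d$, so the event $\{X_n\in B\}$ forces the walk to leave the ball $B(x,d)$ by time $n$, hence $\{X_n\in B\}\subset\{T_{B(x,d)}\le n\}$. Theorem \ref{tebarimp} (applicable because $(\overline{E})$ is assumed) yields
\begin{equation*}
\mathbb{P}_x(X_n\in B)\le \mathbb{P}_x(T_{x,d}<n+1)\le C\exp[-ck(x,n,d)],
\end{equation*}
and taking the minimum of $k(x,n,d)$ over $x\in A$ inside the exponential gives
\begin{equation*}
I\le C\mu(A)\exp\bigl(-ck(n,A,B)\bigr).
\end{equation*}

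Next I would exploit reversibility. Because $\mu(x)P_n(x,y)=\mu(y)P_n(y,x)$ and hence $p_n(x,y)\mu(x)\mu(y)=p_n(y,x)\mu(y)\mu(x)$, the same sum can be written as
\begin{equation*}
I=\sum_{y\in B}\mu(y)\,\mathbb{P}_y(X_n\in A),
\end{equation*}
and repeating the argument with the roles of $A$ and $B$ swapped delivers
\begin{equation*}
I\le C\mu(B)\exp\bigl(-ck(n,B,A)\bigr).
\end{equation*}

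Finally I would take the geometric mean of these two bounds:
\begin{equation*}
I^2\le C^{2}\mu(A)\mu(B)\exp\bigl(-c[k(n,A,B)+k(n,B,A)]\bigr),
\end{equation*}
and using $a+b\ge\max(a,b)$ for nonnegative $a,b$, the exponent dominates $\tfrac{c}{2}\kappa(n,A,B)$. After relabelling the constant $c$ this gives the desired inequality $(DG)$. The only mildly delicate point is the passage from $T_{x,d}\le n$ to the form $T_{x,d}<n+1$ required by Theorem \ref{tebarimp}; this is harmless because $k(x,n+1,d)$ and $k(x,n,d)$ differ by at most a multiplicative constant (via $(TC)$, which is implicit in $(\overline{E})$), so the loss is absorbed in $c$. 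No substantial obstacle arises beyond this bookkeeping; the real content of the statement is carried entirely by Theorem \ref{tebarimp} together with reversibility.
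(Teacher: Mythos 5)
Your proof is correct and delivers the same bound, but it bypasses the Cauchy--Schwarz step that organizes the paper's argument. The paper first applies Cauchy--Schwarz,
\begin{equation*}
\sum_{x\in A,y\in B}P_n(x,y)\mu(x) \leq \mu(A)^{1/2}\left(\sum_{x\in A}\mu(x)\Bigl[\sum_{y\in B}P_n(x,y)\Bigr]^{2}\right)^{1/2},
\end{equation*}
then expands the square, uses reversibility to flip one of the two $P_n$ factors, bounds the remaining one by $\sum_y P_n(x,y)\le 1$, and applies Theorem~\ref{tebarimp} to bound the inner term by $\mu(B)\exp(-c\,k(n,B,A))$; the mirror bound in $k(n,A,B)$ is asserted by symmetry, and the minimum of the two gives $\kappa$. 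You instead bound $I=\sum_{x\in A}\mu(x)\mathbb{P}_x(X_n\in B)$ directly via Theorem~\ref{tebarimp} to get $\mu(A)\exp(-c\,k(n,A,B))$, use reversibility once to rewrite $I$ as the mirror sum over $B$ and obtain $\mu(B)\exp(-c\,k(n,B,A))$, and take a geometric mean. Both arguments rest on exactly the same two ingredients --- Theorem~\ref{tebarimp} and reversibility --- so this is a reorganization rather than a new idea, but your version is shorter and avoids the quadratic-sum manipulation. One correction to a side remark: the paper records only $(TC)\Rightarrow(\overline{E})$, not the converse, so your parenthetical ``via $(TC)$, which is implicit in $(\overline{E})$'' is backwards and cannot be invoked here; this does not damage the argument, since the off-by-one between $T_{x,d}\le n$ and the $T_{x,d}<n$ appearing in Theorem~\ref{tebarimp} is a bookkeeping point the paper itself passes over without comment.
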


\begin{proof}
Using the Chebysev inequality one receives 
\begin{eqnarray}
&&\sum_{x\in A,y\in B}P_{n}(x,y)\mu (x)  \label{gdi1} \\
&=&\sum_{x\in \Gamma }\mu \left( x\right) ^{1/2}I_{A}\left( x\right) \left[
\mu ^{1/2}(x)I_{A}\left( x\right) \sum_{y\in B}P_{n}(x,y)I_{B}\left(
y\right) \right] \\
&\leq &\left( \mu \left( A\right) \right) ^{1/2}\left\{ \sum_{x\in \Gamma
}\mu \left( x\right) I_{A}\left( x\right) \left[ \sum_{y\in \Gamma
}P_{n}\left( x,y\right) I_{B}\left( y\right) \right] ^{2}\right\} ^{1/2}. 
\notag
\end{eqnarray}%
Let us deal with the second term denoting $r=d\left( A,B\right) $ 
\begin{eqnarray}
&&\sum_{x\in \Gamma }\mu \left( x\right) I_{A}\left( x\right) \left[
\sum_{y\in \Gamma }P_{n}\left( x,y\right) I_{B}\left( y\right) \right] ^{2}
\label{gdi2} \\
&=&\sum_{x\in \Gamma }\mu \left( x\right) I_{A}\left( x\right) \sum_{y\in
\Gamma }P_{n}\left( x,y\right) I_{B}\left( y\right) \sum_{z\in \Gamma
}P_{n}\left( x,z\right) I_{B}\left( z\right)  \notag \\
&=&\sum_{x\in \Gamma }\sum_{y\in \Gamma }\sum_{z\in \Gamma }P_{n}\left(
x,z\right) I_{B}\left( z\right) \mu \left( x\right) I_{A}\left( x\right)
P_{n}\left( x,y\right) I_{B}\left( y\right) \\
&=&\sum_{z\in \Gamma }\sum_{y\in \Gamma }\sum_{x\in \Gamma }P_{n}\left(
z,x\right) I_{B}\left( z\right) \mu \left( z\right) I_{A}\left( x\right)
P_{n}\left( x,y\right) I_{B}\left( y\right)  \label{last} \\
&\leq &\sum_{z\in \Gamma }\sum_{x\in \Gamma }P_{n}\left( z,x\right)
I_{B}\left( z\right) \mu \left( z\right) I_{A}\left( x\right) \sum_{y\in
\Gamma }P_{n}\left( x,y\right) I_{B}\left( y\right)  \notag \\
&\leq &\sum_{z\in \Gamma }\sum_{x\in \Gamma }P_{n}\left( z,x\right)
I_{B}\left( z\right) \mu \left( z\right) I_{A}\left( x\right) \\
&\leq &\sum_{z\in \Gamma }P_{n}\left( z,A\right) I_{B}\left( z\right) \mu
\left( z\right) \leq \sum_{z\in \Gamma }P_{z}(T_{z,r}<n)I_{B}\left( z\right)
\mu \left( z\right)  \notag \\
&\leq &\max\limits_{z\in B}\exp \left[ -ck\left( z,n,r\right) \right] \mu
\left( B\right) .
\end{eqnarray}%
The combination of $\left( \ref{gdi1}\right) $ and $\left( \ref{gdi2}-\ref%
{last}\right) $ gives the second term in the definition of $\kappa $ and by
symmetry one can obtain the first one.
\end{proof}

\subsection{The parabolic mean value inequality}

\label{ssPMV}In order to show the off-diagonal upper estimate we need that
the so called parabolic mean value $\left( PMV\right) $ inequality follows
from the diagonal upper estimate. Working under the conditions $\left(
p_{0}\right) ,\left( VD\right) $ \ and $\left( TC\right) $ \ we will show
the following implications 
\begin{equation*}
\left( DUE\right) \Longrightarrow \left( PMV\right)
\end{equation*}

and%
\begin{equation*}
\left( PMV\right) +\left( DG\right) \Longrightarrow \left( UE\right) .
\end{equation*}%
In doing so we introduce $\left( PMV\right) $.

\begin{definition}
A weighted graph satisfies the parabolic mean value inequality\textbf{\ }$%
\left( \mathbf{PMV}\right) $ if for fixed constants $0<c_{1}<c_{2}$ there is
a $C>1$ such that for arbitrary $x\in \Gamma $ \ and $R>0,$ using the
notations $\ E=E\left( x,R\right) ,B=B\left( x,R\right) ,n=c_{2}E,\Psi =%
\left[ 0,n\right] \times B$ for any non-negative Dirichlet solution of the
heat equation 
\begin{equation*}
P^{B}u_{i}=u_{i+1}
\end{equation*}%
on $\Psi ,$ the inequality 
\begin{equation}
u_{n}(x)\leq \frac{C}{E\left( x,R\right) V(x,R)}\sum_{i=c_{1}E}^{c_{2}E}%
\sum_{y\in B(x,R)}u_{i}(y)\mu (y)\,  \label{PMV}
\end{equation}%
holds.
\end{definition}

\begin{theorem}
\label{tDUE->PMV}If $\left( \Gamma ,\mu \right) $ \ satisfies $\left(
p_{0}\right) ,\left( VD\right) $ \ and $\left( TC\right) $, then 
\begin{equation*}
\left( DUE\right) \Longrightarrow \left( PMV\right)
\end{equation*}
\end{theorem}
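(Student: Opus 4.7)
The plan is to convert the on-diagonal estimate $(DUE)$ into a uniform bound on the killed heat kernel $p_k^B(x,y)$ for times $k$ comparable to $E:=E(x,R)$, and then average in time. Fix the non-negative Dirichlet caloric function $u_i = (P^B)^i u_0$ on $\Psi$. The semigroup property applied ``backwards'' gives, for every $0\le i < n$,
$$u_n(x) = \sum_{y\in B} p_{n-i}^B(x,y)\, u_i(y)\, \mu(y),$$
so the entire proof reduces to the uniform estimate
$$p_{n-i}^B(x,y) \le \frac{C}{V(x,R)}, \qquad y\in B,\ i\in[c_1 E,\, c_3 E], \qquad (\ast)$$
for some intermediate $c_3\in(c_1,c_2)$. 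The role of $c_3$ is to guarantee $n-i \ge (c_2-c_3)E$; without this restriction the time $n-i$ could be $0$ at $i=n$, where $(\ast)$ fails trivially.

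To prove $(\ast)$ I apply Lemma \ref{lp<pp} (together with the pointwise bound $p_k^B\le p_k$) to obtain $p_{n-i}^B(x,y) \le \sqrt{p_{n-i}(x,x)\, p_{n-i}(y,y)}$, and then bound each factor by $(DUE)$: $p_{n-i}(z,z) \le C/V(z, e(z, n-i))$ for $z\in\{x,y\}$. Since $n-i\asymp E(x,R)$, the time comparison $(TC)$ together with the inversion estimate $(\ref{eb})$ gives $e(x, n-i) \asymp e(y, n-i) \asymp R$, and then the doubling and anti-doubling properties $(\ref{PD2V})$ and $(\ref{PD3V})$ of $(VD)$ yield $V(y, e(y, n-i)) \asymp V(x,R)$, establishing $(\ast)$.

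Plugging $(\ast)$ into the representation and averaging over the $\asymp E$ values $i\in[c_1 E, c_3 E]$ gives
$$u_n(x) \le \frac{C}{(c_3-c_1)E\, V(x,R)} \sum_{i=c_1 E}^{c_3 E} \sum_{y\in B} u_i(y)\,\mu(y) \le \frac{C'}{E\, V(x,R)} \sum_{i=c_1 E}^{c_2 E} \sum_{y\in B} u_i(y)\,\mu(y),$$
where the last inequality uses $u_i\ge 0$ to enlarge the summation range from $[c_1 E, c_3 E]$ to $[c_1 E, c_2 E]$. This is $(PMV)$.

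The main obstacle is the bookkeeping needed to establish $(\ast)$ with a constant $C$ independent of $x$, $y$, $R$ and $i$: one has to choose $c_3$ so that $(c_2-c_3)E(x,R)$ serves as the effective lower bound on $n-i$, and then chain together $(TC)$, $(\ref{eb})$, and the two-sided $(VD)$ estimates without losing control of constants along the way. All ingredients are in place from the preceding sections; the argument is otherwise routine.
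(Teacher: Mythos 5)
The paper gives no proof of this theorem --- it simply cites \cite{Tfull} --- so there is nothing internal to compare your argument against. Your self-contained argument is correct and is the natural one: the representation $u_n(x)=\sum_{y\in B}p^B_{n-i}(x,y)u_i(y)\mu(y)$ for a Dirichlet solution, the Cauchy--Schwarz bound $p^B_{n-i}(x,y)\leq\sqrt{p_{n-i}(x,x)\,p_{n-i}(y,y)}$ from Lemma~\ref{lp<pp} together with $p^B_k\leq p_k$, then $(DUE)$ at each of $x,y$, and finally the comparability $e(z,n-i)\asymp R$ and $V(z,e(z,n-i))\asymp V(x,R)$ for $z\in B$ from $(TC)$, the two-sided regularity $(\ref{Eb})$/$(\ref{eb})$ and $(VD)$. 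You correctly flag the one necessary precaution, namely restricting to $i\leq c_3E$ with $c_3<c_2$ so that $n-i\gtrsim E(x,R)$ with a definite lower bound; enlarging the range to $[c_1E,c_2E]$ afterward is harmless by non-negativity. Two small points worth making explicit if you were to write this out in full: (i) Lemma~\ref{lp<pp} requires splitting $n-i$ into two halves, so for odd $n-i$ one actually obtains two on-diagonal factors at times $\asymp n-i$ rather than exactly $n-i$, which does not affect the estimate; (ii) $(\ref{Eb})$/$(\ref{eb})$ are not hypotheses of the theorem but consequences of $(TC)$, $(VD)$, $(p_0)$ via the doubling of $E$ built into $(TC)$ and the anti-doubling from Theorem~\ref{taDT}, so you should cite those rather than treat the remark as a black box. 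With these clarifications the argument is complete.
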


\begin{proof}
For the proof see \cite{Tfull}.
\end{proof}

\begin{remark}
Let us observe that if for non-negative Dirichlet (sub-)solutions the
parabolic mean value inequality holds then it holds on non-negative
(sub-)solutions as well. \ This can be seen by the decomposition of an $%
u\geq 0$ solution on $B\left( x,2R\right) $ on the smaller ball $B\left(
x,R\right) $ into nonnegative combination of non-negative Dirichlet
solutions in $B\left( x,2R\right) .$ (c.f. \cite{De}).
\end{remark}

\subsection{The local upper estimates}

\setcounter{equation}{0}\label{sLUE}

\begin{proposition}
\label{tLUEvv}Assume that $\left( \Gamma ,\mu \right) $ satisfies $\left(
p_{0}\right) $, $\left( PMV\right) $ and $\left( TC\right) $. Let $x,y\in
\Gamma $ then there are $c,C>0,\beta >1$ such that for all $x,y\in \Gamma
,n>0$%
\begin{equation}
p_{n}\left( x,y\right) \leq \frac{C}{\sqrt{V\left( x,e\left( x,n\right)
\right) V\left( y,e\left( y,n\right) \right) }}\exp \left[ -c\left( \frac{%
E\left( x,d\left( x,y\right) \right) }{n}\right) ^{\frac{1}{\beta -1}}\right]
.  \label{interLUE}
\end{equation}
\end{proposition}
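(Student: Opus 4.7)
The plan is to combine $(PMV)$ with the exit-time tail estimate of Theorem \ref{tebarimp} (available since $(TC)$ implies $(\overline{E})$). Set $d := d(x,y)$ and choose $R := e(x,n/c_2)$ so that $n = c_2 E(x,R)$, aligning with the time window of $(PMV)$ applied at $(n,x)$ on $B := B(x,R)$; by the regularity implied by $(TC)$, $V(x,R) \asymp V(x, e(x,n))$. First dispose of the near-diagonal case $d < 2R$: here $E(x,d) \leq C n$, so the sub-Gaussian factor is bounded below by a positive constant, and the bound reduces to $p_n(x,y) \leq C/\sqrt{V(x,e(x,n))\,V(y,e(y,n))}$, which follows from Lemma \ref{lp<pp} and the diagonal estimate. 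Note that $(DUE)$ is itself an immediate consequence of $(PMV)$ applied to the solution $u_i(z) = p_i(z,x)$, so it is available for free.

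For the main case $d \geq 2R$, every $z \in B$ satisfies $d(y,z) \geq d - R \geq d/2$, so $B \cap B(y,d/2) = \emptyset$. Applying $(PMV)$ (in its extended form for general nonnegative solutions, noted in the Remark following its definition) to $u_i(z) := p_i(z,y)$ yields
\begin{equation*}
p_n(x,y) \leq \frac{C}{E(x,R)\,V(x,R)} \sum_{i=c_1 E(x,R)}^{n} \sum_{z \in B} p_i(z,y)\,\mu(z).
\end{equation*}
Reversibility identifies the inner sum with $\mathbb{P}_y(X_i \in B)$, which is bounded by $\mathbb{P}_y(T_{B(y,d/2)} \leq i)$. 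Theorem \ref{tebarimp} then gives a bound of $C\exp[-c\,k(y,i,d/2)]$, and by the estimate $(\ref{k>})$ together with $(TC)$ (which yields $E(y,d/2) \asymp E(x,d)$), one obtains
\begin{equation*}
k(y,i,d/2) \geq c\left(\frac{E(x,d)}{n}\right)^{1/(\beta-1)} - 1.
\end{equation*}
Since the sum contains at most $n$ terms and $E(x,R) \asymp n$, collecting the estimates produces
\begin{equation*}
p_n(x,y) \leq \frac{C}{V(x,e(x,n))} \exp\!\left[-c\left(\frac{E(x,d(x,y))}{n}\right)^{1/(\beta-1)}\right].
\end{equation*}

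Finally, the same argument with the roles of $x$ and $y$ interchanged, using the symmetry $p_n(x,y) = p_n(y,x)$ and the $(TC)$-comparability of $E(x,d)$ and $E(y,d)$, gives the analogous bound with $V(y,e(y,n))$ in place of $V(x,e(x,n))$; taking the geometric mean of the two inequalities produces the claimed estimate. The main technical difficulty is the book-keeping: one must verify that the chosen radius $R$ couples properly with the $(PMV)$ time window, that the exit-time radius $d/2$ and time window $[c_1 E(x,R),n]$ transform under $(\ref{k>})$ and $(TC)$ into precisely the exponent $(E(x,d(x,y))/n)^{1/(\beta-1)}$, and that the symmetric bound can be obtained without additional hypotheses beyond $(TC)$.
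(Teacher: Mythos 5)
Your proof is correct, but it follows a genuinely different route from the paper's. The paper applies $(PMV)$ \emph{twice} — once centred at $x$ on $B(x,R)$ with $R=e(x,n)$ and once centred at $y$ on $B(y,S)$ with $S=e(y,n)$ — which turns $p_n(x,y)$ into a double space-time average of $p_j(z,w)$ over $B(x,R)\times B(y,S)$. That double sum is then controlled by the Davies--Gaffney inequality (Theorem \ref{tGDI}), whose $\sqrt{\mu(A)\mu(B)}$ factor cancels against the product $V(x,R)V(y,S)$ from the two $(PMV)$ prefactors, so the geometric-mean denominator $\sqrt{V(x,e(x,n))V(y,e(y,n))}$ appears in one shot. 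You instead apply $(PMV)$ only once at $x$, use reversibility to recognise $\sum_{z\in B}p_i(z,y)\mu(z)=\mathbb{P}_y(X_i\in B)$, bound this by the exit probability from $B(y,d/2)$, and invoke the exit-time tail estimate (Theorem \ref{tebarimp}) together with $(\ref{k>})$ and $(TC)$. This gives a one-sided bound with $V(x,e(x,n))$ in the denominator, and you recover the symmetric form by swapping $x$ and $y$ and taking the geometric mean of the two resulting inequalities — a valid step, since a quantity bounded by two expressions is bounded by their geometric mean. Your route is arguably more elementary: it avoids Theorem \ref{tGDI} entirely, replacing it by the single-point exit-time tail, at the cost of requiring the symmetrisation at the end. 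Two small remarks: your choice $R=e(x,n/c_2)$ aligns cleanly with the $(PMV)$ time window, but then relating $V(x,R)$ to $V(x,e(x,n))$ implicitly uses $(VD)$ together with the anti-doubling of $E$ from $(TC)$; the paper sidesteps this by taking $R=e(x,n)$ directly (accepting some constant-level slack in the $(PMV)$ window). Also, your reduction of $(DUE)$ from $(PMV)$ — plugging in $u_i(z)=p_i(z,x)$ and using $\sum_z p_i(x,z)\mu(z)\le 1$ — is correct and is indeed how one gets the near-diagonal case, but strictly speaking the paper routes this through Theorem \ref{tDUE->PMV} in the opposite direction; your observation is a handy shortcut.
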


\begin{proof}
The proof \ combines the repeated use of the parabolic mean value inequality
and the Davies-Gaffney inequality. We follow the idea of \cite{LW}. \ Denote 
$R=e\left( x,n\right) ,S=e\left( y,n\right) $ and assume that $d\geq \frac{2%
}{3}\left( R+S\right) $ which ensures that $r=d-R-S\geq \frac{1}{3}d$. From $%
\left( PMV\right) $ it follows that \ 
\begin{equation*}
p_{n}\left( x,y\right) \leq \frac{C}{V\left( x,R\right) E\left( x,R\right) }%
\sum_{c_{1}E\left( x,R\right) }^{c_{2E\left( x,R\right) }}\sum_{z\in V\left(
x,R\right) }p_{k}\left( z,y\right) \mu \left( z\right)
\end{equation*}%
and using $\left( PMV\right) $ for $p_{k}\left( z,y\right) $ on gets 
\begin{equation}
p_{n}\left( x,y\right) \leq \frac{C}{V\left( x,R\right) V\left( y,S\right)
n^{2}}\sum_{i=c_{1}n}^{c_{2}n}\sum_{z\in V\left( x,R\right)
}\sum_{j=c_{1}n+i}^{c_{2}n+i}\sum_{w\in V\left( y,S\right) }p_{j}\left(
z,w\right) \mu \left( z\right) \mu \left( w\right)  \label{pinterm1}
\end{equation}%
Now by $\left( \ref{D}\right) $ and $\left( \ref{k>}\right) $ and denoting $%
A=B\left( x,R\right) ,B=B\left( y,S\right) $ we obtain 
\begin{equation}
p_{n}\left( x,y\right) \leq \frac{C\sqrt{V\left( x,R\right) V\left(
y,S\right) }}{V\left( x,R\right) V\left( y,S\right) n^{2}}%
\sum_{i=c_{1}n}^{c_{2}n}\sum_{j=c_{1}n+i}^{c_{2}n+i}e^{-c\kappa \left(
n,A,B\right) }.  \label{pinterm2}
\end{equation}%
Using $\left( TC\right) $ and $R<\frac{3}{2}d$ one can see that%
\begin{equation*}
\max_{z\in V\left( x,R\right) }\exp -c\left( \frac{E\left( z,d/3\right) }{n}%
\right) ^{\frac{1}{\beta -1}}\leq \exp -c\left( \frac{E\left( x,d\right) }{n}%
\right) ^{\frac{1}{\beta -1}}
\end{equation*}%
and similarly%
\begin{eqnarray*}
\max_{w\in V\left( y,R\right) }\exp -c\left( \frac{E\left( w,d/3\right) }{n}%
\right) ^{\frac{1}{\beta -1}} &\leq &\exp -c\left( \frac{E\left( y,d\right) 
}{n}\right) ^{\frac{1}{\beta -1}} \\
&\leq &\exp -c\left( \frac{E\left( x,d\right) }{n}\right) ^{\frac{1}{\beta -1%
}},
\end{eqnarray*}%
which results that 
\begin{equation*}
p_{n}\left( x,y\right) \leq \frac{C}{\sqrt{V\left( x,R\right) V\left(
y,S\right) }}\exp \left[ -c\left( \frac{E\left( x,d\right) }{n}\right) ^{%
\frac{1}{\beta -1}}\right] .
\end{equation*}%
It is left to treat the case $d\left( x,y\right) <\frac{2}{3}\left(
R+S\right) $. In this case $\kappa \left( n,B\left( x,R\right) ,B\left(
y,S\right) \right) =0$ in $\left( \ref{pinterm2}\right) .$ On the other hand
either $d\left( x,y\right) <R$ or $d\left( x,y\right) <S$ 
\begin{equation*}
E\left( x,d\right) \leq E\left( x,e\left( x,n\right) \right) =n,
\end{equation*}%
which results that $1\leq C\exp \left[ -c\left( \frac{E\left( x,d\right) }{n}%
\right) ^{\frac{1}{\beta -1}}\right] $ for a fixed $C>0$ and similarly if $%
d\left( x,y\right) <S,$ $1\leq C\exp \left[ -c\left( \frac{E\left(
y,d\right) }{n}\right) ^{\frac{1}{\beta -1}}\right] \leq C\exp \left[
-c\left( \frac{E\left( x,d\right) }{n}\right) ^{\frac{1}{\beta -1}}\right] $
which gives the statement.
\end{proof}

The next lemma is from \cite{Tfull}, which leads to the upper estimate.

\begin{lemma}
\label{lvv}If $\left( p_{0}\right) ,\left( VD\right) $ and $\left( TC\right) 
$ hold then for all $\varepsilon >0$ there are $C_{\varepsilon },C>0$ such
that for all $n>0,x,y\in \Gamma ,d=d\left( x,y\right) $%
\begin{equation*}
\sqrt{\frac{V\left( x,e\left( x,n\right) \right) }{V\left( y,e\left(
y,n\right) \right) }}\leq C_{\varepsilon }\exp \varepsilon C\left( \frac{%
E\left( x,d\right) }{n}\right) ^{\frac{1}{\left( \beta -1\right) }}.
\end{equation*}
\end{lemma}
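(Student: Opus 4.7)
The plan is to control the ratio of volumes by ball containment and $(VD)$, and then absorb the resulting polynomial factor into the exponential by using the sub-Gaussian growth of $E$.

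Let $R = e(x,n)$ and $S = e(y,n)$, and split into two regimes according to whether $d$ is dominated by $\max(R,S)$ or not.

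\emph{Regime 1: $d < R$ or $d < S$.} Then either $y\in B(x,R)$ or $x\in B(y,S)$, so $(\ref{eb})$ with $m=n$ forces $R\asymp S$. Combined with $(\ref{vcomp})$ this yields $V(x,R)\asymp V(y,S)$. The left-hand side of the claimed inequality is then bounded by an absolute constant, while the right-hand side is at least $C_\varepsilon$; absorbing constants settles this case.

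\emph{Regime 2: $d\ge R$ and $d\ge S$.} Since $B(x,R)\subset B(y,R+d)\subset B(y,2d)$, the polynomial form of $(VD)$, namely $(\ref{vcomp})$, gives
\[
V(x,R)\le V(y,2d)\le C\left(\frac{2d}{S}\right)^{\!\alpha}V(y,S),
\]
so $\sqrt{V(x,R)/V(y,S)}\le C(d/S)^{\alpha/2}$. Next, $(\ref{Eb})$ at $y$ (which is available under $(TC)$) together with $E(y,S)\asymp n$ (which follows from $S=e(y,n)$) gives $E(y,d)\ge c(d/S)^{\beta'}n$, and $(TC)$ applied at scale $\sim d$ replaces $E(y,d)$ by $E(x,d)$ up to a multiplicative constant. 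Therefore
\[
(d/S)^{\alpha/2}\le C\left(\frac{E(x,d)}{n}\right)^{\!\alpha/(2\beta')}.
\]
The elementary inequality $t^a\le C_\varepsilon\exp(\varepsilon t^b)$, valid for all $t\ge 0$ and any positive constants $a,b,\varepsilon$, applied with $t=(E(x,d)/n)^{1/(\beta-1)}$, $b=1$, and $a=\alpha(\beta-1)/(2\beta')$, closes the estimate.

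The main technical nuisance is applying $(TC)$ cleanly at scale $d$ to pass from $E(y,d)$ to $E(x,d)$, and juggling the three exponents $\alpha,\beta,\beta'$ in the final absorption step; everything else is direct repackaging of inequalities already collected in Sections \ref{sdefs}--\ref{sineq}.
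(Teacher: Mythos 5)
The paper does not actually prove Lemma~\ref{lvv}; it imports it wholesale from \cite{Tfull}, so there is no in-text argument to compare against. Taken on its own merits your argument is correct and self-contained, modulo the usual discrete-radius fussiness. The case split on whether $d$ exceeds $\max(R,S)$ is the right organizing idea: in the near regime $(\ref{eb})$ with $m=n$ collapses the two radii, and in the far regime the ball inclusion $B(x,R)\subset B(y,2d)$ plus $(\ref{VDaa})$ reduces everything to a power of $d/S$.

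A few points are compressed and worth spelling out if this were to be written in full. First, $E(y,S)\asymp n$ for $S=e(y,n)$ needs one line: $E(y,S)\ge n$ by definition of $e$, and for $S\ge 2$ time-doubling (from $(TC)$ with $y=x$) gives $E(y,S)\le E(y,2(S-1))\le CE(y,S-1)<Cn$, while the case $S=1$ is handled by $(p_0)$, which bounds $E(y,1)$ by $1/p_0$. Second, the passage from $E(y,d)$ to $E(x,d)$ via $(TC)$ needs the observation that $y\in B(x,d+1)$ (not $B(x,d)$, as the ball is open) together with one more application of time-doubling to absorb the $+1$. Third, you should record that in Regime~2 one automatically has $E(x,d)/n\ge E(x,R)/n\ge 1$, so the argument of the exponential is bounded below; this isn't strictly needed for the elementary bound $t^a\le C_\varepsilon e^{\varepsilon t}$ (valid for all $t\ge 0$), but it reassures the reader that nothing degenerates. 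With these details filled in the proof is sound, and the final absorption of the polynomial $\bigl(E(x,d)/n\bigr)^{\alpha(\beta-1)/(2\beta')}$ into the exponential is exactly the standard move.
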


\begin{theorem}
\label{tLUE2}Assume that $\left( \Gamma ,\mu \right) $ satisfies $\left(
p_{0}\right) $, $\left( VD\right) ,\left( TC\right) $ and $\left( DUE\right) 
$. Let $x,y\in \Gamma $, then $\left( UE\right) $ holds: 
\begin{equation}
p_{n}\left( x,y\right) \leq \frac{C}{V\left( x,e\left( x,n\right) \right) }%
\exp \left[ -c\left( \frac{E\left( x,d\left( x,y\right) \right) }{n}\right)
^{\frac{1}{\beta -1}}\right] .
\end{equation}
\end{theorem}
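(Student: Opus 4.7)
The plan is to combine three results already available in the excerpt: the implication $(DUE) \Rightarrow (PMV)$ (Theorem \ref{tDUE->PMV}), the Davies--Gaffney type inequality (Theorem \ref{tGDI}), and the symmetric off-diagonal bound of Proposition \ref{tLUEvv}, and then to eliminate the asymmetry in the volume factor by means of Lemma \ref{lvv}.

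First, I would note that the hypotheses of the theorem give everything we need to feed into Proposition \ref{tLUEvv}. Indeed, $(p_0)+(VD)+(TC)+(DUE)$ yields $(PMV)$ by Theorem \ref{tDUE->PMV}, and since $(TC) \Rightarrow (\overline{E})$ (as noted in the remark after the definition of $(TC)$), Theorem \ref{tGDI} furnishes $(DG)$. Thus, applying Proposition \ref{tLUEvv} directly, we obtain the symmetric intermediate estimate
\begin{equation*}
p_n(x,y) \leq \frac{C}{\sqrt{V(x,e(x,n))\,V(y,e(y,n))}}\exp\left[-c\left(\frac{E(x,d(x,y))}{n}\right)^{\frac{1}{\beta-1}}\right].
\end{equation*}

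The second and only substantive step is to replace the symmetric denominator by $V(x,e(x,n))$. I would write
\begin{equation*}
\frac{1}{\sqrt{V(x,e(x,n))\,V(y,e(y,n))}} = \frac{1}{V(x,e(x,n))}\sqrt{\frac{V(x,e(x,n))}{V(y,e(y,n))}}
\end{equation*}
and apply Lemma \ref{lvv} to the square root factor with a parameter $\varepsilon>0$ chosen small enough that $\varepsilon C' < c/2$, where $C'$ is the constant from Lemma \ref{lvv} and $c$ is the exponential constant from Proposition \ref{tLUEvv}. This produces a factor $C_\varepsilon\exp[\varepsilon C'(E(x,d)/n)^{1/(\beta-1)}]$ which, multiplied with the existing Gaussian-type factor, yields $C_\varepsilon \exp[-(c/2)(E(x,d)/n)^{1/(\beta-1)}]$. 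Absorbing $C_\varepsilon$ into $C$ and relabelling $c/2$ as $c$ delivers $(UE)$.

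The only potential obstacle is the bookkeeping of the constants in this absorption, but the statement of Lemma \ref{lvv} is explicitly designed for this: the parameter $\varepsilon$ in the exponent can be taken arbitrarily small at the cost of enlarging the prefactor $C_\varepsilon$, which is exactly what is required. No further estimates are needed, so the proof reduces to a clean composition of Proposition \ref{tLUEvv} with Lemma \ref{lvv}.
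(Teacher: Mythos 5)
Your proof is correct and follows essentially the same route as the paper: derive $(PMV)$ from $(DUE)$ via Theorem \ref{tDUE->PMV}, apply Proposition \ref{tLUEvv} to get the symmetric intermediate bound, and then use Lemma \ref{lvv} with $\varepsilon$ small to absorb the volume ratio into the exponential. The only minor addition is that you make explicit the chain $(TC)\Rightarrow(\overline{E})\Rightarrow(DG)$ underlying Proposition \ref{tLUEvv}, which the paper leaves implicit.
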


\begin{proof}
From Theorem \ref{tDUE->PMV} we have that%
\begin{equation*}
\left( DUE\right) \Longrightarrow \left( PMV\right) .
\end{equation*}%
Now we can use Proposition \ref{tLUEvv} \ which states that from $\left(
PMV\right) $ and $\left( TC\right) $ it follows that 
\begin{equation*}
p_{n}\left( x,y\right) \leq \frac{C}{\sqrt{V\left( x,e\left( x,n\right)
\right) V\left( y,e\left( y,n\right) \right) }}\exp \left[ -c\left( \frac{%
E\left( x,d\left( x,y\right) \right) }{n}\right) ^{\frac{1}{\beta -1}}\right]
.
\end{equation*}%
Let us use Lemma \ref{lvv}, 
\begin{eqnarray*}
p_{n}\left( x,y\right) &\leq &\frac{C}{V\left( x,e\left( x,n\right) \right) }%
\sqrt{\frac{V\left( x,e\left( x,n\right) \right) }{V\left( y,e\left(
y,n\right) \right) }}\exp \left[ -c\left( \frac{E\left( x,d\left( x,y\right)
\right) }{n}\right) ^{\frac{1}{\beta -1}}\right] \\
&\leq &\frac{CC_{\varepsilon }}{V\left( x,e\left( x,n\right) \right) }\exp %
\left[ \varepsilon C\left( \frac{E\left( x,r\right) }{n}\right) ^{\frac{1}{%
\left( \beta -1\right) }}-c\left( \frac{E\left( x,d\left( x,y\right) \right) 
}{n}\right) ^{\frac{1}{\beta -1}}\right]
\end{eqnarray*}%
and choosing $\varepsilon $ small enough we get the statement.
\end{proof}

\section{Example}

\setcounter{equation}{0}\label{sexample}In this section we recall from \cite%
{Tfull} an example for a graph which is not covered by any of the previous
results of on- and off-diagonal upper estimates but satisfies the conditions
of Theorem \ref{tmain}.

Let $G_{i}$ be the subgraph of the Vicsek tree  (c.f. \cite%
{GT2}\ ) which contains the root $\ z_{0}$ and has diameter $D_{i}=23^{i}$.
Let us denote by $z_{i}$ the cutting points on the infinite path with $%
d\left( z_{0},z_{i}\right) =D_{i}$. \ Denote $G_{i}^{\prime }=\left(
G_{i}\backslash G_{i-1}\right) \cup \left\{ z_{i-1}\right\} $ for $i>0,$ the
annulus defined by $G$-s.

The new graph is defined by stretching the Vicsek tree as follows. \
Consider the subgraphs $G_{i}^{\prime }$ and replace all the edges of them
by a path of length $i+1$. Denote the new subgraph by $A_{i},$ the new
blocks by $\Gamma _{i}=\cup _{j=0}^{i}A_{i},$ the new graph is $\Gamma =\cup
_{j=0}^{\infty }A_{j}$. We denote again by $z_{i}$ the cutting point between 
$A_{i}$ \ and $A_{i-1}$.\ For $x\neq y,x\sim y$ let $\mu _{x,y}=1$.

One can see that neither the volume nor the mean exit time grows
polynomially on $\Gamma ,$ on the other hand, $\Gamma $\ is a tree and the
resistance grows asymptotically linearly on it.

It was shown in \cite{Tfull} that the tree $\Gamma $ \ satisfies $\left(
p_{0}\right) ,\left( VD\right) ,\left( TC\right) $ furthermore the mean
value inequality (for all the definitions and details see \cite{Tfull}). \
The main result there states that under these conditions the diagonal upper
estimate holds. Since $\Gamma $ satisfies $\left( p_{0}\right) ,\left(
VD\right) ,\left( TC\right) $ and $\left( DUE\right) $ we are in the scope
of Theorem \ref{tmain} and all the isoperimetric inequalities hold.

\end{document}